\newtheorem{theorem}{Theorem}[section]
\newtheorem{lemma}[theorem]{Lemma}
\newtheorem{corollary}[theorem]{Corollary}
\theoremstyle{definition}
\theoremstyle{remark}
\numberwithin{equation}{section}
\newcommand{\B}{\mathbf{B}_i}
\begin{document}
	
	\title[An open mapping theorem]{An open mapping theorem for  
		nonlinear operator equations associated with elliptic complexes
	}

	\author[A. Polkovnikov]{Alexander Polkovnikov}
	
	\address{Siberian Federal University,
		Institute of Mathematics and Computer Science,
		pr. Svobodnyi 79,
		660041 Krasnoyarsk,
		Russia}
	
	\email{paskaattt@yandex.ru}
	
	%\thanks will become a 1st page footnote.
	%	\thanks{The work was supported by the Foundation for the Advancement of Theoretical Physics %and Mathematics "BASIS"}
	
	%\date{July 12, 2021}
	
	%\dedicatory{\it This paper is dedicated to ?.}
	
	%    General info
	\subjclass [2010] {58J10, 35K45}
	
	\keywords{elliptic differential complexes, parabolic nonlinear equations, 
		open mapping theorem}
	
	\begin{abstract}
		Let $ \{A^i,E^i\} $ be the elliptic complex on a $ n $-dimensional smooth 
		closed Riemannian
		manifold $X$ with the first order differential operators $ A^i $ and smooth vector 
		bundles $ E^i $ over $X$. We consider nonlinear operator equations, associated with the 
		parabolic differential operators $\partial_t +   \Delta^{i}  $, generated by the Laplacians $\Delta^{i} 	$ of the complex $ \{A^i,E^i\}$, in special 
		Bochner-Sobolev functional spaces.
		We prove that under reasonable assumptions regarding the nonlinear term  
		the Frech\'et derivative $ \mathcal{A}_i' $ of 
		the induced nonlinear mapping is continuously invertible  and the map $ 
		\mathcal{A}_i $ is open and injective in chosen spaces.
	\end{abstract}

	\maketitle
	
	\section*{Introduction}
	
	Let $X$ be a Riemannian $ n $-dimensional smooth compact closed manifold and $ E^{i} $ be 
	smooth vector bundles over $X$. Denote by $ C^{\infty}_{E^{i}}(X) $  the space of all smooth 
	sections of the bundle $ E^{i} $. Consider an  
	elliptic complex of the first order differential operators $ A^i $ on $X$,
	\begin{equation}\label{key}
		0\longrightarrow C^{\infty}_{E^{0}}(X)\xrightarrow{A^0} C^{\infty}_{E^{1}}(X)
		\xrightarrow{A^1} \cdots \xrightarrow{A^{N-1}} C^{\infty}_{E^{N}}(X)\longrightarrow 0.
	\end{equation}
	where $ A^i\circ A^{i-1} \equiv 0$. 	 In this case it is equivalent to say 
	that the Laplacians	$  \Delta^{i} = (A^i)^* A^{i} + A^{i-1} (A^{i-1})^*$, 
	$ i=0, 1, \dots, N $, of the complex are the second order strongly elliptic differential 
	operators on $X$ where operator $ (A^i)^* $ is formal adjoint to $ A^i $ (see \eqref{key655}  below). For $ i<0 $ and $ i\geq N $ we assume that $ A^{i} = 0 $.
	
	Inspired by typical nonlinear problems of the Mathematical Physics, see,
	for instance \cite{Lion69}, \cite{Temam79}, we consider a family of nonlinear 
	parabolic equations, associated with the complex $ \{A^i,E^i\}$. 
	With this purpose, we denote by $ M_{i,j} $ two bilinear bi-differential operators of zero 
	order (see \cite{Bourbaki} or \cite{Tarkhanov95}),
	\begin{equation}\label{eq1}
		M_{i,1}(\cdot,\cdot): (E^{i+1}, E^{i})\to E^{i},\quad M_{i,2}(\cdot,\cdot): (E^{i}, E^{i})\to E^{i-1}.
	\end{equation}
	We set for a differentiable section $ v $ of the bundle $ E^{i} $
	\begin{equation}\label{eq2}
		N^{i}(v) =: M_{i,1} ( A^{i}v, v) +  A^{i-1} M_{i,2}(v, v).
	\end{equation}
	
	Let now $ X_T $ be a cylinder, $ X_T = X \times [0,T]$, where the
	time $ T>0 $ is finite. Then, for any fixed positive number $ \mu $  
	the operators $ \partial_t + \mu \Delta^{i} $ are parabolic on $  X \times (0,+\infty) $ (see \cite{Eidelman}).
	
	Consider the following initial problem: given section $ f $ of the induced bundle $ E^{i} (t) $ (the variable $ t $ enters into this bundle as a 
	parameter) and section $ u_0 $ of the bundle $ E^{i} $, find a section $ u $ of the induced bundle $ E^{i} (t) $ and a section $ p $ of the induced bundle $ E^{i-1} (t) $ such that
	\begin{equation}\label{eq3}
		\begin{cases}
			\partial_t u + \mu \Delta^{i} u + N^{i}(u) + A^{i-1} p = f& \text{in } X \times (0,T),\\
			(A^{i-1})^* u =0 & \text{in } X \times [0,T],\\
			(A^{i-2})^* p =0 & \text{in } X \times [0,T],\\
			u(x,0) = u_0& \text{in } X,
		\end{cases}
	\end{equation}
	Recently such a problem was considered in the weighted H\"older spaces 
	over ${\mathbb R^n} \times [0,T]$, where the weights provide prescripted 
	asymptotic behaviour at the infinity with respect to the space variables 
	(see \cite{ShlapunovPar});   it was proved that image of the operator 
	$\mathcal{A}_i$, induced by \eqref{eq3}, is open in these spaces.
	
	If $ n=2 $ or $n=3$ and complex \eqref{key} 
	is the de Rham complex, $ \{A^i,E^i\}  = \{d^i,\Lambda^i\} $ with 
	the exterior differentials acting on sections of the bundle of the exterior 
	differential forms on $X$, then for $i=1$ and a suitable choice 
	of the nonlinear term we may treat \eqref{eq3} 
	as the initial problem for the well known Navier-Stokes equations for incompressible fluid over the manifold $X$ (see, for instance, \cite{Tarkhanov19}). 
	Note that if $i=1$ then the equation with  respect to $p$ is actually 
	missing because $(A^{-1})^* = 0$.
	
	Of course, there are plenty of  papers devoted to Navier-Stokes equations on 
	Riemannian manifolds beginning from the pioneer work \cite{Ebin70}. However we will not concentrate our efforts on aspects of Hydrodynamics.

	In contrast to \cite{ShlapunovPar} we consider this problem in special Sobolev-Bochner type spaces, cf. \cite{ShT20} 
	for the de Rham  complex on the torus ${\mathbb T}^3$ in the case where $i=1$. 
	Namely, using the standard tools, 
	such as the interpolation Gagliardo-Nirenberg inequalities, Gronwall type lemmas and 
	Faedo-Galerkin approximations, see, for instance, \cite{Lion69}, \cite{Temam79}, 
	we show that suitable linearizations of our problem has 
	one and only one solution and nonlinear problem (\ref{eq3}) has at most 
	one solution in the constructed spaces. Applying the
	implicit function theorem for Banach spaces we prove that the image of $\mathcal{A}_i$ is 
	open in selected spaces (thus, obtaining the so-called "open mapping 
	theorem"). However, we do not discuss here much more delicate question 
	on the existence of solution to nonlinear 
	problem (\ref{eq3}) aiming at the maximal generality of the nonlinear term 
	$N^{i}(u)$ for the open mapping theorem to be true. It is worth to note that 
	even for existence of weak (distribultional) solutions to (\ref{eq3}) one has to impose rather restrictive but reasonable additional assumptions on the nonlinearity $N^{i}$.

	\section{Functional spaces and basic inequalities}
	
	We need to introduce appropriate functional spaces. Namely, let $ dx $ be a volume form on $ X $ and a $ (\cdot,\cdot)_{x,i} $ denotes a Riemannian metric in the fibres of $ E^{i} $. As usual, we equip each bundle $ E^{i} $ with
	a smooth bundle homomorphism $ *_i : E^{i} \to E^{i*} $ defined by $ <*_i u,v>_{x,i} = (v,u)_{x,i}$ for all $ u,v\in E^{i}$. Then we can to consider the space $ C^{\infty}_{{i}}(X) $  with the unitary structure
	\begin{equation*}
		(u,v)_{i} = \int_X (u,v)_{x,i}\,dx
	\end{equation*}
	and the Lebesgue space $ L^{2}_{{i}}(X) $  with the norm $ \|u\|_i = \sqrt{(u,u)_{i}}$. In this case the formal adjoint to $ A^i $ operator $ (A^i)^*:C^{\infty}_{E^{i+1}}(X)  \to C^{\infty}_{E^{i}}(X) $ is defined in  the following way 
	for the sections $ u\in  E^{i+1} $ and $ v\in  E^{i} $:
	\begin{equation}\label{key655}
		\left( (A^i)^* u,v\right)_{i}:=   \left( u, A^i v\right)_{i+1}.
	\end{equation}
	
	Let  $ \{U_l\}_{l=1}^N $ be a finite open cover of $ M $ by coordinate neighborhoods over  which  $ E^i $  is  trivial and $ \{\psi_l\}_{l=1}^N $ a corresponding partition of unity,
	\[
	\psi_l\in C_{{i}}^\infty(X),\quad \psi_l(x)\geq 0,
	\]
	\[
	\mbox{supp}\, \psi_l \subset U_l,\quad \sum_{l=1}^N\psi_l(x) \equiv 1\text{ on } X.
	\]
	As usual, denote by $ W^{s}_{i,p}(X) $, $s\in\mathbb{Z}_+ $, $1\leq p\leq \infty$, the Sobolev space under the smooth vector bundle $ E^{i} $ (see, for instance, \cite{Agronovich15}). It is a Banach space of sections $ u \in L^{p}_{{i}}(X) $ with the norm
	\begin{equation*}%\label{norm.sobol}
		\|u\|_{W^{s}_{i,p}(X)} =
		\left(  \sum\limits_{l=1}^N\sum\limits_{|\alpha| \leq s} \| \partial^\alpha (\psi_l u) \|^p_{L^p(\mathbb R^n)}\right) ^{1/p},
	\end{equation*}
	In particular, it is a Hilbert space for $ p=2 $, we denote it by $ H^{s}_{{i}}(X):= W^{s}_{i,2}(X)$.
	
	Now, let $ \mathcal{H}^i $ be the so-called 'harmonic space' of
	complex (\ref{key}), i.e.
	\begin{equation}\label{eq20}
		\mathcal{H}^i = \left\lbrace u \in C^{\infty}_i(X): A^i	u = 0 \mbox{ and } (A^{i-1})^*	u = 0\mbox{ in }X \right\rbrace,
	\end{equation}
	and $ \Pi^i $ be the orthogonal projection from $ L^2_i (X) $ onto $ \mathcal{H}^i $.
	
	The following standard Hodge theorem for elliptic complexes 
	plays an essential 
	role for our investifations.
	
	\begin{theorem}\label{parametrix.laplas}
		Let $ 0 \leq i \leq N $, $ s\in \mathbb{Z}_+  $, $ 0 < \lambda < 1 $. Then operator 
		\begin{equation}\label{lapl}
			\Delta^{i}: H^{s+2}_i(X)\to H^s_i(X)
		\end{equation}
		is Fredholm:
		
		(1) the kernel of operator (\ref{lapl}) equals to the finite-dimensional space $ \mathcal{H}^i $;
		
		(2) given $ v\in H^s_i(X) $ there is a form $ u\in H^{s+2}_i(X) $  such that $ \Delta^{i} u = v  $ if and
		only if $ (v, h)_i = 0 $ for all $ h \in \mathcal{H}^i $;
		
		(3) there exists a pseudo-differential operator $ \varphi^i $ on $ X $ such that the operator
		\begin{equation}%\label{key}
			\varphi^i: H^s_i(X)\to H^{s+2}_i(X),
		\end{equation}
		induced by $ \varphi^i $, is linear bounded and with the identity $ I $ we have
		\begin{equation}\label{eq22}
			\varphi^i \Delta^{i} = I - \Pi^i\mbox{ on } H^{s+2}_i(X),\quad  \Delta^{i} \varphi^i = I - \Pi^i\mbox{ on } H^{s}_i(X)
		\end{equation}
	\end{theorem}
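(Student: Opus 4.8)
The plan is to treat $\Delta^i$ as a second-order strongly elliptic, formally self-adjoint operator on the compact closed manifold $X$ and to invoke the standard elliptic theory together with the algebraic structure $\Delta^i = (A^i)^* A^i + A^{i-1}(A^{i-1})^*$. First I would record that $\Delta^i$ is formally self-adjoint, since each of the two summands is self-adjoint by the adjoint relation \eqref{key655}, and that it is strongly elliptic---a fact already noted in the introduction as equivalent to the ellipticity of the complex \eqref{key}. These two properties are the only structural inputs needed beyond the general machinery for elliptic operators on compact manifolds (see, e.g., \cite{Agronovich15}).

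For part (1), I would compute for smooth $u$ the identity $(\Delta^i u, u)_i = \|A^i u\|^2_{i+1} + \|(A^{i-1})^* u\|^2_{i-1}$ using \eqref{key655}. If $\Delta^i u = 0$ this forces $A^i u = 0$ and $(A^{i-1})^* u = 0$, so $u \in \mathcal{H}^i$; the reverse inclusion $\mathcal{H}^i \subset \ker \Delta^i$ is immediate from \eqref{eq20}. To pass from smooth sections to elements of $H^{s+2}_i(X)$ I would invoke elliptic regularity: any $u \in H^{s+2}_i(X)$ with $\Delta^i u = 0$ is in fact smooth, hence lies in $\mathcal{H}^i$. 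Finiteness of $\dim \mathcal{H}^i$ then follows from the a priori (G\r{a}rding) elliptic estimate combined with the Rellich compactness of the embedding $H^{s+2}_i(X) \hookrightarrow H^s_i(X)$.

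The core of the argument is the parametrix/Green's-operator construction yielding (3) and, with it, (2). Since $\Delta^i$ is elliptic, its principal symbol is invertible off the zero section, so the pseudo-differential calculus on the compact manifold $X$ produces an operator of order $-2$ that inverts $\Delta^i$ modulo smoothing operators; together with Rellich compactness this gives the Fredholm property of \eqref{lapl}. To obtain the sharp identities \eqref{eq22} I would instead take $\varphi^i$ to be the Green's operator of $\Delta^i$, namely the operator inverting $\Delta^i$ on the orthogonal complement $(\mathcal{H}^i)^\perp$ and annihilating $\mathcal{H}^i$. Because $\Delta^i$ is elliptic and self-adjoint, $\varphi^i$ is a classical pseudo-differential operator of order $-2$, hence bounded $H^s_i(X) \to H^{s+2}_i(X)$, and it satisfies $\varphi^i \Delta^i = \Delta^i \varphi^i = I - \Pi^i$ by construction. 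Finally, for part (2), self-adjointness of the Fredholm operator $\Delta^i$ forces its range to equal $(\ker \Delta^i)^\perp = (\mathcal{H}^i)^\perp$, so $v \in H^s_i(X)$ lies in the range if and only if $(v,h)_i = 0$ for all $h \in \mathcal{H}^i$; a solution is then given explicitly by $u = \varphi^i v$, the required gain of two derivatives following again from the boundedness of $\varphi^i$.

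The step I expect to be the main obstacle is establishing that the Green's operator $\varphi^i$ is genuinely a bounded map $H^s_i(X) \to H^{s+2}_i(X)$ satisfying the \emph{exact} identities \eqref{eq22}, rather than identities valid only modulo smoothing operators. This requires identifying $\varphi^i$ with a true two-sided inverse on $(\mathcal{H}^i)^\perp$ and pinning down its pseudo-differential order, which rests on the full strength of the spectral and parametrix theory for self-adjoint elliptic operators on compact manifolds.
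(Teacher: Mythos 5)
Your proposal is a correct sketch of the standard Hodge-theoretic argument: self-adjointness and strong ellipticity of $\Delta^i$, the identity $(\Delta^i u,u)_i=\|A^iu\|_{i+1}^2+\|(A^{i-1})^*u\|_{i-1}^2$ for part (1), elliptic regularity plus Rellich compactness for the Fredholm property, and the Green's operator (a parametrix corrected by a smoothing term, hence a pseudo-differential operator of order $-2$) for parts (2) and (3). The paper does not prove this theorem itself but simply cites \cite[Theorem 2.2.2]{Tarkhanov95}, and your argument is essentially the proof that reference contains, so there is nothing to fault here.
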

	\begin{proof}%\label{key}
		See, for instance, %\cite{Nicolaescu} or 
		\cite[Theorem 2.2.2]{Tarkhanov95}.
	\end{proof}
	
	For $ m\in \mathbb{N} $ we denote by $ \tilde\nabla^m_i $ an elliptic differential operator of order $ m $, connected with the complex (\ref{key}),
	\begin{equation}\label{frac.laplase}
		\tilde\nabla^m_i : = 
		\begin{cases}
			(\Delta^i)^{m/2},\quad &m\ \text{is even},\\
			(A^i \oplus (A^{i-1})^*)(\Delta^i)^{(m-1)/2},\quad &m\ \text{is odd}.\\
		\end{cases}
	\end{equation}
	It is easy to see that kernel of $ \tilde\nabla^m_i $ for all $ m\in \mathbb{N} $ equals to $ \mathcal{H}^i $.
	Then it follows from the ellipticity of (\ref{frac.laplase}), that there exist a parametrix $ \tilde\varphi^i_m $
	such that 
	\[
	I = \tilde\nabla^m_i \tilde\varphi^i_m + \Pi^i
	\]
	with identity operator $ I $ (see, for instance, \cite[Chapter 10]{Nicolaescu}).
	Then for each $ m\in \mathbb{N} $ and $1\leq p\leq \infty$ we can to consider the completion of the space $ C^{\infty}_{{i}}(X) $ by the norm 
	\begin{equation}\label{norm.sobol}
		\|u\|_{\tilde\nabla^m_i,p} = 
		\begin{cases}
			\left( \|(\Delta^i)^{m/2}u\|^p_{L^{p}_{{i}}(X)} + \|\Pi^i u\|^p_{L^{p}_{{i}}(X)}\right)^{1/p}, &m\ \text{is even},\\
			\left( \|A^i(\Delta^i)^{(m-1)/2}u\|^p_{L^{p}_{{i+1}}(X)} +\right. \\ \left. +\|(A^{i-1})^*(\Delta^i)^{(m-1)/2}u\|^p_{L^{p}_{{i-1}}(X)}+ \|\Pi^i u\|^p_{L^{p}_{{i}}(X)}\right)^{1/p},  &m\ \text{is odd}.\\
		\end{cases}
	\end{equation}
	Also we may define in a standard way the 
	linear elliptic pseudo-differential operator $ \nabla^s_i = (\Delta^i)^{s/2}$ of order  $ s\in \mathbb{R}_+ $ on the section $ E_i $, see, for instance, \cite{Agronovich94}. 
	As above, there exist a parametrix $ \varphi^i_m $ 
	such that 
	\[
	I = \nabla^m_i \varphi^i_m + \Pi^i.
	\]
	For  $u\in C^{\infty}_{{i}}(X) $ we denote by $ \|u\|_{\nabla^s_i,p} $ the norm
	\begin{equation}\label{norm.sobol1}
		\|u\|_{\nabla^s_i,p} = \left( \| \nabla^s_i u \|^p_{L^p_i(X)} + \|\Pi^i_m u \|^p_{L^p_i(X)}\right)^{1/p} .
	\end{equation}
	\begin{lemma}\label{equivalent}
		For each $ m\in \mathbb{Z}_+ $, $ s\in \mathbb{R}_+ $ and  $ 1< p< \infty $ there are constants $ \tilde C_{1} $, $ \tilde C_{2} $, $ C_{1} $ and $ C_{2} $, such that
		\begin{equation}\label{eqiv.norm.sobolev.dif}
			\tilde C_{1} \|u\|_{\tilde\nabla^m_i,p} \leq \|u\|_{W^{s}_{i,p}(X)} \leq \tilde C_{2} \|u\|_{\tilde\nabla^m_i,p}
		\end{equation}
		\begin{equation}\label{eqiv.norm.sobolev.psedif}
			C_{1,s} \|u\|_{\nabla^s_i,p} \leq \|u\|_{W^{s}_{i,p}(X)} \leq C_{2,s} \|u\|_{\nabla^s_i,p}
		\end{equation}
		for all $ u\in C^{\infty}_{{i}}(X) $.
	\end{lemma}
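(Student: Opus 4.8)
The plan is to reduce both norm equivalences to two ingredients: the parametrix identities recorded just above, and the $L^p$-continuity of elliptic pseudodifferential operators between $L^p$-Sobolev scales on the compact manifold $X$, valid precisely for $1<p<\infty$. Throughout, I would use that a classical pseudodifferential operator of order $r$ on $X$ maps $W^{\sigma}_{i,p}(X)$ boundedly into $W^{\sigma-r}_{i,p}(X)$ for $1<p<\infty$ (localizing by the partition of unity $\{\psi_l\}$, transferring to $\mathbb{R}^n$, and invoking the Euclidean Hörmander--Mikhlin estimates; see, e.g., \cite{Agronovich94}), and that $\Pi^i$, being of finite rank with smooth kernel, is bounded from $L^p_i(X)$ into every $W^{\sigma}_{i,p}(X)$. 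For non-integer $s$ the space $W^{s}_{i,p}(X)$ is understood as the corresponding Bessel-potential space, for which the same pseudodifferential mapping properties hold.

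Consider first the pseudodifferential estimate \eqref{eqiv.norm.sobolev.psedif}. Since $\Delta^i$ is strongly elliptic of order $2$, the operator $\nabla^s_i=(\Delta^i)^{s/2}$ is an elliptic pseudodifferential operator of order $s$; hence $\|\nabla^s_i u\|_{L^p_i(X)}\le C\|u\|_{W^{s}_{i,p}(X)}$, and $\|\Pi^i u\|_{L^p_i(X)}\le C\|u\|_{L^p_i(X)}\le C\|u\|_{W^{s}_{i,p}(X)}$, giving the left inequality. For the right inequality I would use that $\nabla^s_i$, $\varphi^i_s$ and $\Pi^i$ are all functions of the self-adjoint operator $\Delta^i$ and therefore commute, so the parametrix identity can be read as $u=\varphi^i_s\,\nabla^s_i u+\Pi^i u$. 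As $\varphi^i_s$ has order $-s$ it maps $L^p_i(X)$ boundedly into $W^{s}_{i,p}(X)$, whence
\[
\|u\|_{W^{s}_{i,p}(X)}\le \|\varphi^i_s\,\nabla^s_i u\|_{W^{s}_{i,p}(X)}+\|\Pi^i u\|_{W^{s}_{i,p}(X)}\le C\big(\|\nabla^s_i u\|_{L^p_i(X)}+\|\Pi^i u\|_{L^p_i(X)}\big),
\]
which is \eqref{eqiv.norm.sobolev.psedif} with $s$-dependent constants.

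For the differential estimate \eqref{eqiv.norm.sobolev.dif} the same scheme applies with $m$ in place of $s$. The left inequality is immediate: each component of $\tilde\nabla^m_i$ (namely $(\Delta^i)^{m/2}$, or $A^i(\Delta^i)^{(m-1)/2}$ and $(A^{i-1})^*(\Delta^i)^{(m-1)/2}$) is a differential operator of order $m$, hence bounded from $W^{m}_{i,p}(X)$ to $L^p$, and $\Pi^i$ is $L^p$-bounded; the direct-sum norm appearing in \eqref{norm.sobol} for odd $m$ is handled by the elementary equivalence of $\ell^p$-type norms on $E^{i+1}\oplus E^{i-1}$. For the right inequality I would again invert via the parametrix, but here $\tilde\nabla^m_i$ is overdetermined for odd $m$, so I need a \emph{left} parametrix. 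This is supplied cleanly by the identity
\[
(\tilde\nabla^m_i)^*\,\tilde\nabla^m_i=(\Delta^i)^m,
\]
valid for both parities because $A^i A^{i-1}\equiv 0$ forces the cross terms to vanish and $(A^i)^*A^i+A^{i-1}(A^{i-1})^*=\Delta^i$. Composing the Green operator of $(\Delta^i)^m$ with $(\tilde\nabla^m_i)^*$ yields a pseudodifferential left parametrix $P$ of order $-m$ with $P\,\tilde\nabla^m_i=I-\Pi^i$, so that $u=P\,\tilde\nabla^m_i u+\Pi^i u$ and, exactly as before, $\|u\|_{W^{m}_{i,p}(X)}\le C\big(\|\tilde\nabla^m_i u\|_{L^p}+\|\Pi^i u\|_{L^p_i(X)}\big)$, which is the asserted bound once $\|\tilde\nabla^m_i u\|_{L^p}$ is expanded into the components of \eqref{norm.sobol}.

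The main obstacle is not the algebra but the single analytic fact that the parametrices $\varphi^i_s$ and $P$, which are genuinely nonlocal pseudodifferential operators of negative order, are bounded from $L^p$ into the corresponding Sobolev space for $p\ne 2$; this is where one cannot remain within elementary differential a priori estimates and must invoke the Calderón--Zygmund / Hörmander--Mikhlin $L^p$-theory, and it is exactly this step that forces the restriction $1<p<\infty$. All constants depend on $m$ (resp.\ $s$), $p$, and the fixed geometric data, but not on $u$, as required.
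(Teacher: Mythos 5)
Your argument is correct and follows essentially the same route as the paper, which disposes of the lemma in one line by citing G\aa{}rding's inequality together with the $L^p$-boundedness of the elliptic pseudodifferential operators $\nabla^s_i$, $\tilde\nabla^m_i$ and their parametrices. You merely fill in the details the paper delegates to references — in particular the observation that for odd $m$ the overdetermined operator $\tilde\nabla^m_i$ needs a \emph{left} parametrix, which you correctly extract from $(\tilde\nabla^m_i)^*\tilde\nabla^m_i=(\Delta^i)^m$, and the replacement of the lower-order term $\|u\|_{L^p}$ by $\|\Pi^i u\|_{L^p}$ via the finite rank and smoothness of $\Pi^i$.
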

	\begin{proof}
		Follows immediately from  the G\aa{}rding's inequality, definitions of operators $ \tilde\nabla^m_i $, $ \nabla^s_i $ and the fact that 
		\[
		\|\nabla^s_i u\|_{L^p_i(X)} \leq c\|u\|_{W^{s}_{i,p}(X)}
		\]
		with some positive constant $ c $, see, for instance, \cite{Agronovich94} or \cite[Proposition 2.4]{Taylor}.
	\end{proof}
	Actually, it follows from (\ref{eqiv.norm.sobolev.dif}) and (\ref{eqiv.norm.sobolev.psedif}) that we can equip the space $ W^{s}_{i,p}(X) $ with the norm (\ref{norm.sobol}) or, 
	for $1<p<\infty$, (\ref{norm.sobol1}).
	
	In the sequel we need the Gronwall's Lemma. Let us recall it.
	\begin{lemma}
		\label{l.Groenwall}
		Let $A$, $B$ and $Y$ be real-valued functions defined on a segment $[a,b]$.
		Assume that
		$B$ and $Y$ are continuous
		and that
		the negative part of $A$ is integrable
		on $[a,b]$.
		If moreover $A$ is nondecreasing, $B$ is nonnegative and $Y$ satisfies the integral 
		inequality
		\begin{equation*}
			Y (t) \leq A (t) + \int_a^t B (s) Y (s) ds
		\end{equation*}
		for all $t \in [a,b]$, then
		$
		\displaystyle
		Y (t) \leq A (t) \exp \Big( \int_a^t B (s) ds \Big)
		$
		for all $t \in [a,b]$.
	\end{lemma}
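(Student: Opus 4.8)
The plan is to convert the integral inequality into a first-order differential inequality for an auxiliary antiderivative and then integrate it explicitly by means of an integrating factor. First I would set $Z(t) = \int_a^t B(s) Y(s)\, ds$, so that $Z(a) = 0$ and, because $B$ and $Y$ are continuous, $Z$ is continuously differentiable with $Z'(t) = B(t) Y(t)$. The hypothesis reads $Y(t) \le A(t) + Z(t)$; multiplying through by $B(t) \ge 0$ yields the differential inequality $Z'(t) \le B(t) A(t) + B(t) Z(t)$.

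Next I would eliminate the $B Z$ term with the integrating factor $E(t) = \exp\big(-\int_a^t B(s)\, ds\big)$, which is positive, of class $C^1$, satisfies $E' = -B E$, and has $E(a) = 1$. Then $\frac{d}{dt}\big(E(t) Z(t)\big) = E(t)\big(Z'(t) - B(t) Z(t)\big) \le E(t) B(t) A(t)$. Integrating from $a$ to $t$ and using $Z(a) = 0$ gives $E(t) Z(t) \le \int_a^t E(s) B(s) A(s)\, ds$.

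At this point the monotonicity of $A$ enters decisively: since $E B \ge 0$ and $A(s) \le A(t)$ for $s \le t$, I can pull the factor $A(t)$ out of the integral, $\int_a^t E(s) B(s) A(s)\, ds \le A(t)\int_a^t E(s) B(s)\, ds$. Because $E B = -E'$, the remaining integral evaluates to $E(a) - E(t) = 1 - E(t)$, so $E(t) Z(t) \le A(t)\big(1 - E(t)\big)$. Dividing by $E(t) > 0$ and adding $A(t)$ then gives $Y(t) \le A(t) + Z(t) \le A(t)/E(t) = A(t)\exp\big(\int_a^t B(s)\, ds\big)$, which is the asserted bound.

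The point that requires care is that $A$ is only assumed nondecreasing (with integrable negative part), not continuous or differentiable, so I must not differentiate the hypothesis directly or treat $A$ as a smooth coefficient. The device that circumvents this is precisely to keep $A$ inside the integral until the final step and then use $A(s) \le A(t)$ together with $E B \ge 0$; monotonicity is exactly what makes this last estimate legitimate, and it is also why no continuity of $A$ is needed. I would also note in passing that a real-valued nondecreasing function on the finite interval $[a,b]$ is automatically bounded, so that all the products appearing under the integral signs are integrable and every integral above is well defined.
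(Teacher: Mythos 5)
The paper does not actually prove this lemma: it is recalled as a standard fact (Gronwall's inequality in the integral form with nondecreasing $A$) with no proof supplied, so there is nothing to compare against line by line. Your argument is the classical integrating-factor proof and it is correct. Each step checks out: $Z(t)=\int_a^t B(s)Y(s)\,ds$ is $C^1$ because $B$ and $Y$ are continuous; multiplying the hypothesis by $B\geq 0$ gives $Z'\leq BA+BZ$; the factor $E(t)=\exp\bigl(-\int_a^t B\bigr)$ turns this into $(EZ)'\leq EBA$; and the estimate $\int_a^t E(s)B(s)A(s)\,ds\leq A(t)\int_a^t E(s)B(s)\,ds = A(t)\bigl(1-E(t)\bigr)$ is legitimate pointwise because $EB\geq 0$ and $A(s)\leq A(t)$, irrespective of the sign of $A(t)$. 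You are also right to flag where the weak regularity of $A$ could cause trouble and why keeping $A$ under the integral until the last step avoids it; your observation that a real-valued nondecreasing function on $[a,b]$ is automatically bounded (and Borel measurable), so that the hypothesis on the negative part of $A$ is not actually needed here, is accurate. The proof can stand as written.
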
 
	
	Let now $V^s_i = H^{s}_{{i}}(X) \cap S_{(A^{i-1})^*} $ stand for the space of all  the sections $ u\in H^{s}_{{i}}(X) $ satisfying $ (A^{i-1})^* u = 0 $ in the sense of the distributions in $ X $. Denote by $L^2(I,H^{s}_{{i}}(X))$ the Bochner space of $L^2$-mappings
	\[
	u(t): I \to H^{s}_{{i}}(X),
	\]
	where $ I = [0,T] $, see, for instance, \cite{Lion69}.
	It is a Banach space with the norm 
	\[
	\|u\|_{L^2(I,H^{s}_{{i}}(X))}^2 = \int_0^T \|u\|^2_{H^{s}_{{i}}(X)} dt.
	\]
	We want to introduce the suitable Bocner-Sobolev type spaces, see, for instance, \cite{ShT20} for the 
	de Rham complex in the degree $i=1$ over the torus ${\mathbb T}^3$.
	As problem \ref{eq3} is inspired by the models of 
	Hydrodynamics, where $u,f,p$ represent velocity, outer force and pressure, respectively,  
	for $s\in\mathbb{Z}_+ $ denote by $ B^{k,2s,s}_{i,\text{vel}}(X_T) $ the 
	space of sections of the induced bundle $ E^i(t) $ over $ X_T $ 
	%(with variable $ t $ as a parameter) 
	such that
	\[
	u\in C (I,V_i^{k+2s})\cap L^2(I,V_i^{k+2s+1})
	\]
	and 
	\[
	% \sum_{l=1}^N
	\nabla^m_i \partial^j_t u \in C (I,V_i^{k+2s-m-2j})\cap L^2(I,V_i^{k+2s+1-m-2j})
	\]
	for all $ m + 2j \leq 2s$. It is a Banach space with the norm 
	$$
	\| u \|^2_{B^{k,2s,s}_{i,\text{vel}} (X_T)}
	:=
	\sum_{m+2j \leq 2s \atop 0\leq l \leq k}
	\| \nabla^l_i \nabla^m_i \partial_t^j u \|^2
	_{C (I,{L}_i^2(X)}
	+ \| \nabla^{l+1}_i \nabla^m_i \partial_t^j u \|^2
	_{L^2 (I,{L}_i^2 (X))}.
	$$
	
	Similarly, for $s, k \in {\mathbb Z}_+$, we define the space 
	$B^{k,2s,s}_{i,\mathrm{for} }
	(X_T)$ to consist of all sections
	\[
	f\in C (I,{H}_i^{2s+k}(X)) \cap L^2 (I,{H}_i^{2s+k+1}(X))
	\]
	with the property that 
	$$
	\nabla^m_i \partial _t^j f
	\in
	C (I,{H}_i^{k+2s-m-2j}(X)) \cap L^2 (I,{H}_i^{k+2s-m-2j+1}(X))
	$$
	provided
	$
	m+2j \leq 2s.
	$
	
	If $f \in B^{k,2s,s}_{i,\mathrm{for}} (X_T)$, then actually
	$$
	\nabla^m_i \partial _t^j f
	\in
	C (I,{H}_i^{k+2(s-j)-m}(X))
	\cap
	L^2 (I,{H}_i^{k+1+2(s-j)-m}(X))
	$$
	for all $m$ and $j$ satisfying $m+2j \leq 2s$.
	We endow the spaces $B^{k,2s,s}_{i,\mathrm{for}} (X_T)$ 
	with the natural norms 
	$$
	\| f \|^2_{B^{k,2s,s}_{i,\mathrm{for}} (X_T)}
	:=
	%\Big( %\sum_{i=0}^k 
	\sum_{m+2j \leq 2s \atop 0\leq l \leq k}
	\| \nabla^l_i \nabla^m_i \partial_t^j f \|^2
	_{C (I,{L}_i^2(X)}
	+ \| \nabla^{l+1}_i \nabla^m_i \partial_t^j f \|^2
	_{L^2 (I,{L}_i^2 (X))}. 
	$$
	
	Finally, the spaces for the section $p$ are 
	$B^{k+1,2s,s}_{i-1,\mathrm{pre}} (X_T)$.  
	By definition, they consists of all sections $p$ from the space 
	$C (I,H_{i-1}^{2s+k+1}(X)) \cap 
	L^2 (I,H_{i-1}^{2s+k+2} (X))$ such that $A^{i-1} p \in B^{k,2s,s}_{i,\mathrm{for}} (X_T)$, $ (A^{i-2})^* p = 0 $ and
	\begin{equation}\label{eq.p.ort.0}
		( p,h)_{L^2_{i-1}(X)} = 0
	\end{equation}
	for all $h \in \mathcal{H}^{i-1} $
	. 
	We equip this space with the norm
	$$
	\| p \|_{B^{k+1,2s,s}_{i-1,\mathrm{pre}} (X_T)}
	= \| A^{i-1} p \|_{B^{k,2s,s}_{i,\mathrm{for}} (X_T)}.
	$$
	
	Consider now bi-differential operator
	\begin{equation}%\label{key}
		\B (w,v) = M_{i,1} ( A^{i}w, v) + M_{i,1} ( A^{i}v, w) +  A^{i-1}\big(  M_{i,2}(w, v)  +  M_{i,2}(v, w)\big)
	\end{equation}
	such that  
	\begin{equation}\label{eq9}
		|M_{i,1}(u,v)|\leq c_{i,1} |u|\,|v|,\quad |M_{i,2}(u,v)|\leq c_{i,2} |u|\,|v| 
		\mbox{ on } X 
	\end{equation}
	with some  positive constants $ c_{i,j} $.
	In the sequel we will always assume that \eqref{eq9} holds on.
	
	\begin{theorem}
		\label{l.NS.cont.0}
		Suppose that
		$s \in \mathbb N$,
		$k \in {\mathbb Z}_+$, $2s+k>\frac{n}{2}-1$.
		Then the mappings
		$$
		\begin{array}{rrcl}
			\nabla^m_i :
			& B^{k,2(s-1),s-1}_{{i},\mathrm{for}} (X_T)
			& \to
			& B^{k-m,2(s-1),s-1}_{{i},\mathrm{for}} (X_T),\ m\leq k
			\\[.2cm]
			\Delta^{i} :
			& B^{k,2s,s}_{{i},\mathrm{vel}} (X_T)
			& \to
			& B^{k,2(s-1),s-1}_{{i},\mathrm{for}} (X_T),
			\\[.2cm]
			\partial_t :
			& B^{k,2s,s}_{{i},\mathrm{vel}} (X_T)
			& \to
			& B^{k,2(s-1),s-1}_{{i},\mathrm{for}} (X_T),\\
		\end{array}
		$$
		are continuous.
		Besides, if 
		$w, v \in B^{k+2,2(s-1),s-1}_{{i},\mathrm{vel}} (X_T)$
		then the mappings
		
		\begin{equation}\label{eq8}
			\begin{array}{rrcl}
				\B(w,\cdot) :
				& B^{k+2,2(s-1),s-1}_{{i},\mathrm{vel}} (X_T)
				& \to
				& B^{k,2(s-1),s-1}_{{i},\mathrm{for}} (X_T),
				\\
				\B (w,\cdot) :
				& B^{k,2s,s}_{{i},\mathrm{vel}} (X_T)
				& \to
				& B^{k,2(s-1),s-1}_{{i},\mathrm{for}} (X_T),\\
			\end{array}
		\end{equation}
		are continuous, too. In particular, for all $w, v \in B^{k+2,2(s-1),s-1}_{{i}} (X_T)$ there is positive constant $c_{s,k} $, independent on $ v $ and $w$, such that  
		\begin{equation}\label{eq.B.pos.bound}
			\|  \B (w,v)\|_{B^{k,2(s-1),s-1}_{{i},\mathrm{for}} (X_T)}
			\leq c_{s,k} 
			\|w\|_{B^{k+2,2(s-1),s-1}_{{i},\mathrm{vel}} (X_T)} 
			\|v\|_{B^{k+2,2(s-1),s-1}_{{i},\mathrm{vel}} (X_T)}.
		\end{equation}
	\end{theorem}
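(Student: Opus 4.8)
The plan is to handle the three linear operators by commutation and reindexing, and to treat the bilinear operator $\B$ by the Sobolev multiplication theorem combined with a H\"older splitting in the time variable.

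For the linear maps, the point is that $\nabla^m_i=(\Delta^i)^{m/2}$, $\Delta^i$, $\partial_t$ and the harmonic projection $\Pi^i$ all commute with one another and with every power of $\Delta^i$ (they are functions of $\Delta^i$ or act in the complementary $t$-variable). Hence applying any of them to a section merely shifts the multi-index $(l,m,j)$ labelling the summands of the norms \eqref{norm.sobol1}: $\nabla^{m_0}_i$ raises the spatial index $l$ by $m_0$, $\Delta^i$ raises $m$ by $2$, and $\partial_t$ raises $j$ by $1$. In each case one checks directly that the shifted index stays inside the summation range of the target space — e.g. for $\Delta^i:B^{k,2s,s}_{i,\mathrm{vel}}(X_T)\to B^{k,2(s-1),s-1}_{i,\mathrm{for}}(X_T)$ the constraint $m+2j\leq 2(s-1)$ on the target becomes $(m+2)+2j\leq 2s$ on the source, and for $\nabla^m_i$ with $m\leq k$ the condition $l\leq k-m$ becomes $l+m\leq k$ — so the target norm is dominated termwise by the source norm. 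Lemma \ref{equivalent} is used throughout to pass freely between the $\nabla$-norms and the Sobolev norms $W^s_{i,p}(X)$.

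For the bilinear estimate \eqref{eq.B.pos.bound} I would first reduce the time derivatives. Since the coefficients of $A^i$, $A^{i-1}$ and of the zero-order operators $M_{i,1}$, $M_{i,2}$ are $t$-independent, Leibniz in $t$ gives
\[
\partial^j_t \B(w,v)=\sum_{\ell=0}^{j}\binom{j}{\ell}\,\B(\partial^\ell_t w,\partial^{j-\ell}_t v),
\]
so it suffices to bound each $\B(\partial^\ell_t w,\partial^{j-\ell}_t v)$ in $C(I,H^{k+2(s-1-j)}_i(X))$ and in $L^2(I,H^{k+2(s-1-j)+1}_i(X))$ for $j\leq s-1$, $0\leq\ell\leq j$. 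Membership of $w,v$ in $B^{k+2,2(s-1),s-1}_{i,\mathrm{vel}}(X_T)$ furnishes exactly the regularities $\partial^\ell_t w\in C(I,H^{k+2(s-\ell)}_i(X))\cap L^2(I,H^{k+2(s-\ell)+1}_i(X))$, and likewise for $v$. The core is the Sobolev multiplication theorem $H^a_i(X)\cdot H^b_i(X)\hookrightarrow H^c_i(X)$, valid for $0\leq c\leq\min(a,b)$ and $a+b-c\geq n/2$ (strict at the endpoint). Working in the local charts of the partition of unity reduces every summand to a product of the two factors, since by \eqref{eq9} the operators $M_{i,j}$ are pointwise bounded and $\B$ carries exactly one spatial derivative (through $A^i$ or $A^{i-1}$). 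Writing $a=k+2(s-\ell)$, $b=k+2(s-j+\ell)$, $c=k+2(s-1-j)$, the total budget is $a+b-c=k+2s+2$, of which one unit is consumed by the single derivative in $\B$ (lowering one factor for the $M_{i,1}$ term, or raising the target by one for the $A^{i-1}M_{i,2}$ term), leaving margin $k+2s+1>n/2$ by the hypothesis $2s+k>\tfrac n2-1$; one checks simultaneously that $c<\min(a,b)$ and all factor regularities are nonnegative. For the $C(I,\cdot)$ part I place both factors in their $C(I,H^\bullet_i(X))$ norms; for the $L^2(I,\cdot)$ part, whose target index is one higher, I split by H\"older in time, keeping one factor in its $\sup_t H^\bullet_i$-norm and the other in its $L^2(I,H^{\bullet+1}_i(X))$-norm, spending the extra unit of regularity available in the velocity norm, and the same margin $k+2s+1>n/2$ closes the estimate. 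Summing over $\ell,j,m$, the finitely many charts, and invoking Lemma \ref{equivalent} yields \eqref{eq.B.pos.bound}.

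Finally, the two continuity statements \eqref{eq8} follow: the first is \eqref{eq.B.pos.bound} verbatim, and the second follows from the continuous embedding $B^{k,2s,s}_{i,\mathrm{vel}}(X_T)\hookrightarrow B^{k+2,2(s-1),s-1}_{i,\mathrm{vel}}(X_T)$, which holds because powers of $\Delta^i$ commute, so the larger spatial index $k+2$ can be traded against the $m$-index while the time-derivative range only shrinks from $2s$ to $2(s-1)$. I expect the main obstacle to be the bookkeeping in the bilinear step: one must verify in every case — the $M_{i,1}$ term versus the $A^{i-1}M_{i,2}$ term, the $C$- versus $L^2$-in-time parts, and the full range $0\leq\ell\leq j\leq s-1$ — that the factor regularities stay nonnegative, that $c<\min(a,b)$, and that $a+b-c\geq n/2$. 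The strictness in $2s+k>\tfrac n2-1$ is precisely what keeps one off the endpoint of the multiplication theorem.
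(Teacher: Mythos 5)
Your proposal is correct, and for the linear mappings it coincides with the paper's (one-line) argument: those continuities are immediate from the definitions of the norms by index shifting. For the bilinear estimate \eqref{eq.B.pos.bound}, however, you take a genuinely different technical route. The paper proves a manifold version of the Gagliardo--Nirenberg inequality \eqref{eq.L-G-N} in local charts, then estimates each summand $\|\nabla_i^m w\|_{L^{2q/(q-1)}_i}\|\nabla_i^j u\|_{L^{2q}_i}$ by an explicit choice of the H\"older exponent $q$ and interpolation parameters $\alpha_1,\alpha_2$ (verifying that the admissible interval for $1/(2q)$ is nonempty precisely because $2s+k>n/2-1$), closes with Young's inequality, and runs an induction on $s$ to handle the time derivatives. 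You instead invoke the Sobolev multiplication theorem $H^a\cdot H^b\hookrightarrow H^c$ (for $0\le c<\min(a,b)$, $a+b-c>n/2$) as a black box, distribute $\partial_t^j$ over $\B$ by Leibniz, and do the index bookkeeping directly for all $j\le s-1$; your computation $a+b-c=k+2s+2$, with one unit spent on the single spatial derivative carried by $A^i$ or $A^{i-1}$ and the residual margin $k+2s+1>n/2$, checks out, as do the constraints $c<\min(a,b)$ and the nonnegativity of all indices, and your H\"older-in-time splitting for the $L^2(I,\cdot)$ component is the same device the paper uses in \eqref{eq.B.cont.7} and \eqref{eq.B.cont.7.sj}. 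What your approach buys is a shorter, induction-free argument with transparent numerology; what it costs is self-containedness, since the multiplication theorem for bundle-valued Sobolev sections on a closed manifold is exactly the kind of product estimate the paper chooses to rederive from scratch via \eqref{eq9}, charts, and interpolation. Your final reduction of the second mapping in \eqref{eq8} to the embedding $B^{k,2s,s}_{i,\mathrm{vel}}(X_T)\hookrightarrow B^{k+2,2(s-1),s-1}_{i,\mathrm{vel}}(X_T)$ is also the paper's closing step.
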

	
	\begin{proof}
		The proof is based on the following version of Gagliardo-Nirenberg
		inequality, see \cite{Nir59} or \cite[Theorem 3.70]{Aubin82} for the Rimanian manifolds. 
		\begin{lemma}%\label{key}
			For all $v \in L_i^{q_0} (X) \cap L_i^{2} (X) $ such that 
			$\nabla_i^{j_0} v \in L_i^{p_0}(X)$  and $\nabla_i^{m_0} v \in L_i^{r_0} (X)$ we have 
			\begin{equation} \label{eq.L-G-N}
				\| \nabla_i ^{j_0} v \|_{L_i^{p_0} (X)} \leq C\left( \left( 
				\| \nabla_i ^{m_0} v \|_{L_i^{r_0}(X)} + \| v \|_{L_i^{2}(X)} \right)^a 
				\| v \|^{1-a}_{L_i^{q_0} (X)}  +  \| v \|_{L_i^{2} (X)}\right) 
			\end{equation}
			with  a positive constant $C= C^{(n)}_{j_0,m_0,s_0} (p_0,q_0,r_0)$ independent on $v$, where 
			\begin{equation} \label{eq.L-G-N.cond}
				\frac{1}{p_0} = \frac{j_0}{n} + a \left(\frac{1}{r_0} -\frac{m_0}{n} \right) + 
				\frac{(1-a)}{q_0} \mbox{ and } \frac{j_0}{m_0} \leq a \leq 1,
			\end{equation}
			with the following exceptional case: if $1<r_0< +\infty$ and $m_0-j_0-n/r_0$ is a non-negative integer then the 
			inequality is valid only for $\frac{j_0}{m_0} \leq a < 1$.
		\end{lemma}
		\begin{proof}%\label{key}
			Indeed, under the hypothesis of Lemma, we have from (\ref{eqiv.norm.sobolev.dif}) and (\ref{eqiv.norm.sobolev.psedif}) 
			\begin{equation}\label{gal1}
				\| \nabla_i ^{j_0} v \|_{L_i^{p_0} (X)} \leq c_1\|v\|_{W^{j_0}_{i,p_0}(X)} \leq
				c_2\|v\|_{\tilde\nabla^{j_0}_i,p_0} \leq
			\end{equation}
			\[
			\begin{cases}
				c_3\left( \|(\Delta^i)^{j_0/2}u\|_{L^{p_0}_{{i}}(X)} + \|\tilde\Pi^i_{m_0} u\|_{L^{p_0}_{{i}}(X)}\right) ,\quad &\text{if}\ j_0\ \text{is even},\\
				c_3\left( \|A^i(\Delta^i)^{(j_0-1)/2}u\|_{L^{p_0}_{{i+1}}(X)} + \right. \\  \left. +\|(A^{i-1})^*(\Delta^i)^{(j_0-1)/2}u\|_{L^{p_0}_{{i-1}}(X)}+ \|\tilde\Pi^i_{m_0} u\|_{L^{p_0}_{{i}}(X)}\right) ,\quad&\text{if}\ j_0\ \text{is odd}.\\
			\end{cases}
			\]
			with positive constants $ c_1 $, $ c_2 $ and $ c_3 $.
			In each local card $ U_l $ we get
			
			\begin{equation}\label{gal3}
				\| (\Delta^i)^{j_0/2} v \|_{L_i^{p_0} (X)} \Big|_{U_l}  \leq c_1
				\left( \|M_{j_0,l} \nabla^{j_0} v_{U_l}\|_{L^{p_0}(\mathbb{R}^n)} + \|N_{j_0,l} v_{U_l}\|_{L^{p_0}(\mathbb{R}^n)} \right) 
				\leq
			\end{equation}
			\[
			\tilde c_1\left( \|\nabla^{j_0} v_{U_l}\|_{L^{p_0}(\mathbb{R}^n)} + \| v_{U_l}\|_{L^{p_0}(\mathbb{R}^n)} \right) 
			\]
			if $ j_0 $ is even, and
			\begin{equation}\label{gal5}
				\left( \|A^i(\Delta^i)^{(j_0-1)/2}v\|_{L^{p_0}_{{i+1}}(X)} + \|(A^{i-1})^*(\Delta^i)^{(j_0-1)/2}v\|_{L^{p_0}_{{i-1}}(X)}\right)  \Big|_{U_l}  \leq
			\end{equation}
			\[
			c_2 \left( 	\|\tilde M_{j_0,l} \nabla^{j_0} v_{U_l}\|_{L^{p_0}(\mathbb{R}^n)} + \|\tilde K_{j_0,l} \nabla^{j_0} v_{U_l}\|_{L^{p_0}(\mathbb{R}^n)} + \|\tilde N_{j_0,l} v_{U_l}\|_{L^{p_0}(\mathbb{R}^n)} + \right. 
			\]
			\[
			\left. \|\tilde T_{j_0,l} v_{U_l}\|_{L^{p_0}(\mathbb{R}^n)}\right)  \leq \tilde c_2\left( \|\nabla^{j_0} v_{U_l}\|_{L^{p_0}(\mathbb{R}^n)} + \| v_{U_l}\|_{L^{p_0}(\mathbb{R}^n)} \right) 
			\]
			if $ j_0 $ is odd, with constants $c_1,\tilde c_1, c_2,\tilde c_2>0 $, where $ M_{j_0,l} $, $ N_{j_0,l} $, $ \tilde M_{j_0,l} $, $\tilde N_{j_0,l} $, $ \tilde K_{j_0,l} $ and $ \tilde T_{j_0,l} $ are some matrices with infinitely differentiable coefficients and $ v_{U_l} $ is  representation of $ v $ in $ U_l $ (see, for instance, \cite[Chapter 1]{Tarkhanov95}). Applying the Gagliardo-Nirenberg inequality (see \cite{Nir59} or \cite{Aubin82}) we have
			\begin{equation}\label{gal4}
				\|\nabla^{j_0} v_{U_l}\|_{L^{p_0}(\mathbb{R}^n)} \leq c\left( 
				\| \nabla^{m_0} v_{U_l} \|_{L^{r_0}(\mathbb{R}^n)}^a 
				\| v_{U_l} \|^{1-a}_{L^{q_0} (\mathbb{R}^n)}  +  \| v_{U_l} \|_{L^{2} (\mathbb{R}^n)}\right)
			\end{equation}
			with constant $ c>0 $,	where indexes are subordinate to (\ref{eq.L-G-N.cond}).
			On the other hand, G\aa{}rding's inequality imply 
			\begin{equation}\label{gal2}
				\| \nabla^{m_0} v_{U_l} \|_{L^{r_0}(\mathbb{R}^n)} \leq c \left( 
				\| \nabla_i ^{m_0} v \|_{L_i^{r_0}(X)} + \|\Pi^i_{m_0} v \|_{L_i^{r_0}(X)} \right)
			\end{equation}
			with positive constant $ c $.
			It follows from the ellipticity of $ \nabla_i ^{m_0} $ that dimension of the space $ \mathcal{ H}_m^i $ is finite, then $ \|\Pi^i_{m_0} v \|_{L_i^{r_0}(X)}\leq c \|\Pi^i_{m_0} v \|_{L_i^{2}(X)} $ with constant $ c>0 $, since all tow norms are equivalent on finite dimension space, and, moreover, there exist a constant $\tilde c>0 $ such that
			\begin{equation}\label{ogrproec}
				\|\Pi^i_{m_0} v \|_{L_i^{2}(X)}\leq c \| v \|_{L_i^{2}(X)}.
			\end{equation}
			Summing up by $ l $ the inequalities (\ref{gal3}) - (\ref{gal2}) and taking into account (\ref{ogrproec}) we receive (\ref{eq.L-G-N}).
		\end{proof}
		
		We need now Young's inequality: given any $N = 1, 2, \ldots$, it follows that
		\begin{equation}
			\label{eq.Young}
			\prod_{j=1}^N a_j \leq \sum_{j=1}^N \frac{a_j^{p_j}}{p_j}
		\end{equation}
		for all positive numbers $a_j$ and all numbers $p_j \geq 1$ satisfying
		$\displaystyle \sum_{j=1}^N 1/p_j = 1$.
		
		Next, the first three linear operators are continuous by the very definition of the function spaces.
		
		Let us prove the (\ref{eq8}). We start with $ s=1 $ and argue by the induction.
		The space $ B^{k+2,0,0}_{{i},\mathrm{vel}} (X_T) $ is continuously 
		embedded into the spaces   $C (I,V^i_{k+2s})$ and
		$L^2(I,V^i_{k+2s+1})$.
		
		First, we note that, by the Sobolev embedding theorem 
		for any
		$k, s \in {\mathbb Z}_+$ 
		satisfying
		\begin{equation}
			\label{eq.Sob.index}
			k - s > n/2,
		\end{equation}
		there exists a constant $c (k,s)$ depending on the parameters, such that
		\begin{equation*}
			\| u \|_{C^{s}_i (X)}
			\leq c (k,s)\, \| u \|_{H^{k}_i(X)}
		\end{equation*}
		for all $u \in H^{k}_i(X)$.
		
		Assume that $n\geq 3$ (for $n=2$ proof is similar with simpler arguments). Then by \eqref{eq.L-G-N} 
		\begin{equation} \label{eq.L-G-N.1}
			\|u\|_{L^{\frac{2n}{n-2}}_i (X)} \leq c\left(  \|\nabla_i u\|
			_{L^2_i (X)} + \| u \|_{L^{2}_i (X)}\right) 
		\end{equation}
		with the Gagliardo-Nirenberg constant $c$, 
		because
		\begin{equation*}
			p_0 = \frac{2n}{n-2}   \mbox{ if } a=1,\ j_0=0,\ m_0=1 \mbox{ and } r_0 = 2. 
		\end{equation*}
		If, in addition, $m'\geq n/2-1$ then by \eqref{eq.L-G-N} and the hypothesis of the lemma, 
		\begin{equation} \label{eq.L-G-N.n}
			\|u\|_{L^n_i (X)} \leq c \left( \|\nabla_i^{m'} u\|^{\frac{n-2}{2m'}}
			_{{L}^2_i (X)}
			\| u\|^{\frac{2m'-n+2}{2m'}}_{{L}^2_i (X)} + \| u \|_{L^{2}_i (X)}\right) 
		\end{equation}
		with the Gagliardo-Nirenberg constant $c$, 
		because
		\begin{equation*}
			\frac{1}{n} =\Big(\frac{1}{2}- \frac{m'}{n}\Big)a + \frac{1-a}{2}, 
			\,\, a = \frac{n-2}{2m'} \in (0,1). 
		\end{equation*}
		
		Using by \ref{eq9}, H\"older's inequality and G\aa{}rding's inequality we have
		\[
		\| \B (w,u) \|^2_{{L}^2_i(X)}
		\leq 
		\tilde{c}\left( \| w \|^2_{{L}_i^n(X)} \left( \| \nabla_i u \|^2 
		_{{L}_i^{\frac{2n}{n-2}}(X)}+ \| u \|^2 
		_{{L}_i^{2}(X)} \right) +\right. 
		\]
		\[
		\left.  \left( \| \nabla_i w \|^{2}_{{L}_i^{\frac{2n}{n-2}}(X)} + \| \nabla_i w \|^{2}_{{L}_i^{2}(X)}\right)  \| u \|^2
		_{{L}_i^n(X)}\right)
		\]
		with a positive constant $ \tilde{c} $, independent of $ u $ and $ w $.
		Then, with $m'=k+2>n/2-1$ in (\ref{eq.L-G-N.n}), 
		\[
		\| w \|^2_{{L}_i^n(X)}  \leq c_1
		\left( 
		\|\nabla_i^{k+2} w\|^{\frac{n-2}{2(k+2)}}_{{L}^2_i (X)}
		\| w\|^{\frac{2k-n+6}{2(k+2)}}_{{L}^2_i (X)}  + \| w \|
		_{{L}^2_i(X)}\right)^2  \leq c_2\| w \|^2_{{H}^{k+2}_i(X)},
		\]	
		\[
		\| \nabla_i u \|^2 
		_{{L}_i^{\frac{2n}{n-2}}(X)} \leq \tilde c_1
		\left( \|\nabla_i^2 u\|_{{L}_i^2 (X)} + \|\nabla_i u \|
		_{{L}^2_i(X)} \right)^2  \leq \tilde c_2\| u \|^2_{{H}^{2}_i(X)},
		\]	
		with positive constants $ {c_1} $, $ {c_2} $, $ \tilde{c_1} $, $ \tilde{c_2} $ independent of $ u $ and $ w $.	Then we have
		
		\begin{equation}\label{eq.B.cont.1}
			\| \B (w,u) \|^2_{{L}_i^2(X)}
			\leq 
		\end{equation}
		\[	c_1
		\left(\| w \|^2_{{H}^{k+2}_i(X)}\left( \| u \|^2_{{H}^{2}_i(X)} + \| u \| 
		_{{L}_i^{2}(X)} \right) \right.  + 
		\]
		\[
		\left. \| u \|^2_{{H}^{k+2}_i(X)}\left( \| w \|^2_{{H}^{2}_i(X)} + \| w \| _{{L}_i^{2}(X)} \right)\right) \leq
		\]
		\[
		\left. +\left( \|\nabla_i^2 w\|_{{L}^2_i (X)} + \|\nabla_i w \|
		_{{L}^2_i(X)} \right)^2 \| u \|^2_{{H}^{k+2}_i(X)}\right)\leq 
		\]
		\[	c_2 \| w \|^2
		_{{H}^{k+2}_i(X)} \| u \|^2_{{H}^{k+2}_i(X)},
		\]
		the positive constants $c_1$ %, $c_2$ 
		and $c_2$ being independent of $u$ and $w$, and so
		\begin{equation} \label{eq.s1.C.base}
			\|\B (w,u) \|^2_{C (I,{L}^2_i(X))}
			\leq c \| u \|^2_{C (I,{H}^{k+2}_i(X))} \| w \|^2_{C (I,{H}^{k+2}_i(X))}
		\end{equation}
		with constant $ c>0 $.
		
		For $n\geq 3$, by G\aa{}rding's and H\"older's inequalities we also have
		\begin{equation}
			\label{eq.B.cont.3}
			\| \nabla_i \B (w,u) \|^2_{{L}^2_i(X)} \leq 
			\tilde{c}
			\left(  \| w \|^2_{{L}^n _i(X)} \| \nabla_i^2 u \|^2
			_{{L}^{\frac{2n}{n-2}} _i(X)}
			+\right. 
		\end{equation}
		\[
		\| \nabla_i w \|^2_{{L}^n_i(X)} \| \nabla_i u \|^2
		_{{L}^{\frac{2n}{n-2}}_i(X)} + \| \nabla_i w \|^2_{{L}^{\frac{2n}{n-2}}_i(X)} \| \nabla_i u \|^2_{{L}^n_i(X)}
		+ 
		\]
		\[
		\|  w \|^2_{{L}^n_i(X)} \| \nabla_i u \|^2
		_{{L}^{\frac{2n}{n-2}}_i(X)} + \| \nabla_i w \|^2_{{L}^{\frac{2n}{n-2}}_i(X)} \|  u \|^2_{{L}^n_i(X)}
		+ 
		\]
		\[
		\|  w \|^2_{{L}^n_i(X)} \| u \|^2
		_{{L}^{\frac{2n}{n-2}}_i(X)} + \| w \|^2_{{L}^{\frac{2n}{n-2}}_i(X)} \|  u \|^2_{{L}^n_i(X)}
		+ 
		\]
		\[
		\left.\| \nabla_i^2 w \|^2_{{L}^{\frac{2n}{n-2}} _i(X)} \| u \|^2
		_{{L}^n_i(X)}\right) \leq c \left(  \| u \|^2_{{H}^{k+3}_i(X)} \| w \|^2_{{H}^{k+3}_i(X)}\right)
		\]
		with a constant $c$ independent of $u$ and $w$.
		On combining \eqref{eq.s1.C.base} and \eqref{eq.B.cont.3} we deduce that, for $n\geq 3$,
		\begin{eqnarray}
			\label{eq.s1.L.base}
			\| \B (w,u) \|^2_{L^2 (I,{H}^1_i(X))}
			\, \leq \,
			c\left(\| u \|^2_{{H}^{k+2}_i(X)} \| w \|^2_{{H}^{k+2}_i(X)} +\right. 	
			\\
			\left. + 
			\| u \|^2_{{H}^{k+3}_i(X)} \| w \|^2_{{H}^{k+3}_i(X)}\right).
			\nonumber
		\end{eqnarray}	
		
		Inequalities
		\eqref{eq.s1.C.base}, \eqref{eq.s1.L.base} provide that the operator 
		$\B (w,\cdot)$ maps 
		$ B^{2,0,0}_{{i},\mathrm{for}} (X_T)$ continuously
		to $  B^{0,0,0}_{{i},\mathrm{for}} (X_T)$ if $k>n/2-3$. 
		
		Next, for any $0 \leq k' \leq k_0$, $k_0=k$ or $k_0=k+1$,
		similarly to  \eqref{eq.B.cont.3}, using the H\"{o}lder and G\aa{}rding's inequality 
		with a number $q = q(k',l)>1$ we obtain 
		\begin{equation*}
			\| \nabla_i^{k'} \B (w,u) \|^2_{{L}_i^2(X)}
			\leq 
			\sum_{l=0}^{k'}
			\sum_{j=0}^{k'+1-l}
			\sum_{m=0}^{l}
			\Big(
			C^{j,m}_{k',l}\| \nabla_i^m w \|^2_{{L}_i^{\frac{2q}{q-1}}(X)} \| \nabla_i^{j} u \|^2
			_{{L}_i^{2q} (X)}
			+ 
		\end{equation*}
		\begin{equation}\label{eq25}
			\tilde C^{j,m}_{k',l} \| \nabla_i^{j} w \|^2_{{L}_i^{\frac{2q}{q-1}}(X)} \| \nabla_i^m u \|^2
			_{{L}_i^{2q}(X)}
			\Big)
		\end{equation}
		
		with positive coefficients $C^{j,m}_{k',l}$, $\tilde C^{j,m}_{k',l}$.
		
		If $0\leq k'\leq k_0$, then we take $q=q (k',0)=\frac{n}{n-2}$ and use \eqref{eq.Young}, 
		\eqref{eq.L-G-N.n} with $m'=k+2$, to obtain 
		\begin{equation} \label{eq.cont.k}
			\|\nabla_i^{j} u\|^2_{{L}_i^{2q} (X)} 
			\|w\|^2_{{L}_i^{\frac{2q}{q-1}}(X)} 
			=\|\nabla_i^{j} u\|^2_{{L}_i^{\frac{2n}{n-2}} (X)} 
			\| w\|^2_{{L}_i^{n}(X)} \leq 
		\end{equation}
		\[
		\tilde{c} \left( \|\nabla_i^{j+1} u\|_{{L}_i^{2} (X)} + \| \nabla_i^{j} u \|_{L^{2}_i (X)}\right)^2
		\left( 
		\|\nabla_i^{k+2} w\|^{\frac{n-2}{2(k+2)}}
		_{{L}_i^2 (X)}
		\| w\|^{\frac{2k-n+6}{2(k+2)}}_{{L}_i^2 (X)} + \| w \|_{L^{2}_i (X)}\right)^2  \leq
		\]
		\[
		c\| u\|^2_{{H}_i^{j + 1} (X)}
		\| w\|^{2}
		_{{H}_i^{k+2} (X)}
		\]
		for each $ 0\leq j \leq k'+1 $, with positive constants $ \tilde{c} $, $c$ independent on $u,w$.
		
		If $1\leq l \leq k'\leq k_0$ then we may apply \eqref{eq.L-G-N} 
		to each factor in the typical summand 
		\begin{equation*} 
			\|\nabla_i^m w\|^2_{{L}_i^{\frac{2q}{q-1}}(X)}
			\|\nabla_i^{j} u\|^2_{{L}_i^{2q} (X)} 
		\end{equation*}
		with $ 0\leq j \leq k'+1 - l $, $ 0\leq m \leq l $ and entries $q=q(k',l)$, $\alpha_j =  \alpha^{(l)}_j$,  satisfying 
		\begin{equation} \label{eq.cont.q}
			\left\{
			\begin{array}{lll}
				\frac{1}{2q} = \frac{j}{n} + \Big(\frac{1}{2} -\frac{k_0+2}{n}\Big) \alpha_1 + 
				\frac{1-\alpha_1}{2}, \\
				\frac{q-1}{2q} = \frac{m}{n} + \Big(\frac{1}{2} -\frac{k+2}{n} \Big) \alpha_2 + 
				\frac{1-\alpha_2}{2}, \\
				\frac{j}{k_0+2} \leq \alpha_1< 1, 
				\frac{m}{k+2} \leq \alpha_2< 1. 
			\end{array}
			\right.
		\end{equation}
		Relations \eqref{eq.cont.q} are actually equivalent to the following:
		\begin{equation} \label{eq.cont.alpha.1}
			\frac{j}{k_0+2} 
			\leq \alpha_1 =\frac{n}{k_0+2} \Big(\frac{1}{2}- \frac{1}{2q}+ \frac{j}{n}\Big)<1
		\end{equation}
		\begin{equation} \label{eq.cont.alpha.2}
			\frac{m}{k+2} \leq \alpha_2 =  \frac{n}{k+2} \Big(\frac{1}{2q} + 
			\frac{m}{n}\Big) <1 . 
		\end{equation}
		The lower bounds are always true if $q>1$ and so, 
		these inequalities are reduced to
		\begin{equation*}
			\frac{1}{2} + \frac{j - k_0 - 2}{n} 
			< \frac{1}{2q}< \frac{k+2-m}{n}, \, \, q>1.
		\end{equation*}
		The segment for $\frac{1}{2q}$ is  not empty 
		because 
		\[
		\frac{1}{2} + \frac{j - k_0 - 2}{n} 
		< \frac{k+2-m}{n}
		\]
		provided by the assumptions $k+3>n/2$, $0\leq k'\leq k_0$, and $j+m\leq k' + 1$. Moreover, as 
		\[
		\frac{1}{2} + \frac{j - k_0 - 2}{n} < \frac{1}{2}, \quad 
		\frac{k+2-m}{n} >0, 
		\]
		we see that there is a proper $q>1$ to achieve 
		\eqref{eq.cont.alpha.1}, \eqref{eq.cont.alpha.2}.
		
		Then, similarly to \eqref{eq.cont.k},
		\begin{equation} \label{eq.cont.kj} 
			\|\nabla_i^{j} u\|^2_{{L}_i^{2q} (X)} 
			\|\nabla_i^m w\|^2_{{L}_i^{\frac{2q}{q-1}}(X)} \leq
		\end{equation} 
		\[
		\tilde{c}
		\left( \|\nabla_i^{k_0+2} u\|^{\alpha_1}_{{L}_i^{2} (X)}\|u\|^{1-\alpha_1}_{{L}_i^{2}(X)} + \|  u \|_{L^{2}_i (X)}\right)^2 \cdot
		\]
		\[
		\left( \|\nabla_i^{k+2} w\|^{\alpha_2}_{{L}_i^{2} (X)}
		\|w\|^{1-\alpha_2}_{{L}_i^{2}(X)} + \|  w \|_{L^{2}_i (X)}\right)^2  \leq 
		\]
		\[
		c
		\| u\|^{2}_{{H}^{k_0+2}_i (X)} 
		\|w\|^{2}_{{H}^{k+2}_i (X)} 
		\]
		with positive constants $ \tilde c $, $c$  independent on $u,w$.
		
		Hence, \eqref{eq.cont.k}, \eqref{eq.cont.kj} yield 
		\begin{equation} \label{eq.B.cont.6}
			\|\B(w,u)\|^2_{C(I,{H}^k_i (X))}
			\leq
			c \| u\|^2_{C(I,{H}_i^{k+2} (X))} 
			\|w\|^2_{C(I,{H}_i^{k+2}(X))}, 
		\end{equation}
		
		\begin{equation} \label{eq.B.cont.7}
			\| \B(w,u)\|^2_{L^2(I,{H}^{k+1}_i (X))}
			\leq 
		\end{equation}
		\[
		\tilde	c \Big(\| u\|^2_{C(I,{H}_i^{k+2} (X))} 
		\|w\|^2_{L(I,{H}_i^{k+3}(X))} +
		\| w\|^2_{C(I,{H}_i^{k+2}(X))} 
		\|u\|^2_{L(I,{H}_i^{k+3}(X))} \Big) ,
		\]
		with positive constants $c$, $ \tilde	c $  independent on $u,w$. 
		
		Now \eqref{eq.B.cont.6}, \eqref{eq.B.cont.7} imply that the mapping 
		$\B (w,\cdot)$ maps $B^{k+2,0,0}_{{i},\mathrm{for}} (X_T)$ continuously 
		to $B^{k,0,0}_{{i},\mathrm{for}} (X_T)$ for any $k > n/2-3$ if $n\geq 3$  
		and bound 
		\eqref{eq.B.pos.bound} hold true for $s=1$.% because of \eqref{eq.Young}.
		
		Next, we argue by the induction. Assume that for some $s'\geq 1$ 
		the mapping $\B (w,\cdot)$ maps $ B^{k+2,2(s'-1),s'-1} _{{i},\mathrm{for}}
		(X_T)$
		continuously to $ B^{k,2(s'-1),s'-1}_{{i},\mathrm{for}} (X_T)$ for any 
		$k > n/2-2s'-1$ and bound   
		\eqref{eq.B.pos.bound} holds true for $s=s'$. 
		Then the space $ B^{k+2,2s',s'}_{{i},\mathrm{for}} (X_T)$
		is continuously embedded to the space $ B^{k+4,2(s'-1),s'-1}_{{i},\mathrm{for}} 
		(X_T)$ and, 
		by the inductive assumption, 
		$\B (w,\cdot)$ maps  $B^{k+4,2(s'-1),s'-1}_{{i},\mathrm{for}}
		(X_T)$ continuously to  $B^{k+2,2(s'-1),s'-1}_{{i},\mathrm{for}}
		(X_T)$ for any $(k+2) > n/2-2s'-1$ 
		or, the same, $k> n/2-2(s'+1)-1$.  Moreover, bound  
		\eqref{eq.B.pos.bound} holds true for $s=s'$ and with $k+2$ instead of $k$.
		
		It is left to check the behaviour of the partial derivatives 
		$\partial^{s'}_t \nabla^{k_0}_i \B (w,u)$ with $k_0\leq k+1$. 
		By the very definition of space $ B^{k+2,2s',s'}_{{i},\mathrm{for}}(X_T)$,  
		the partial derivatives $\partial^{j'}_t u$, $\partial^{j'}_t w$ belong to  
		$C (I,{H}_i^{k+2+2(s'-j')}(X))$ and
		$L^2 (I, {H}_i^{k+3+2(s'-j')} (X))$. 
		
		By the Leibniz rule, 
		\begin{equation*} %\label{eq.Leib.t}
			\partial_t \B(w,u) =  \B(\partial_t w,u)  + \B(w,\partial_t u).
		\end{equation*}
		Then for any $0 \leq k' \leq k_0$, $k_0=k$ or $k_0=k+1$, $ 0\leq i'\leq s' $,
		similarly to  \eqref{eq.B.cont.3}
		\[
		\| \nabla_i^{k'} \partial_t^{i'} \B (w,u) \|^2_{{L}_i^2(X)}
		\leq 
		\]
		\[
		\sum_{j'=0}^{i'}
		\sum_{l=0}^{k'}
		\sum_{j=0}^{k'+1-l}
		\sum_{m=0}^{l}
		\Big(
		C^{j,m,j'}_{k',l,i'}\| \partial_t^{j'} \nabla_i^m  w \|^2_{{L}_i^{\frac{2q}{q-1}}(X)} \| \partial_t^{i'-j'} \nabla_i^{j} u \|^2
		_{{L}_i^{2q} (X)}
		+ 
		\]
		\begin{equation}\label{eq26}
			\tilde C^{j,m,j'}_{k',l,i'} \| \partial_t^{i'-j'} \nabla_i^{j} w \|^2_{{L}_i^{\frac{2q}{q-1}}(X)} \| \partial_t^{j'}\nabla_i^m u \|^2
			_{{L}_i^{2q}(X)}
			\Big)
		\end{equation}
		with positive coefficients $C^{j,m,j'}_{k',l,i'}$, $\tilde C^{j,m,j'}_{k',l,i'}$.
		
		Similarly to \eqref{eq.cont.k}, if $0\leq k'\leq k+1$ 
		then we take $q=q(k',0,s')=\frac{n}{n-2}$ and use \eqref{eq.Young}, \eqref{eq.L-G-N.n} 
		with $m'=2s'+k+2$, to obtain 
		\begin{equation} \label{eq.cont.ks}
			\|\partial_t^{s'} \nabla_i^{k'+1} u\|^2_{{L}_i^{2q} (X)} 
			\|  w\|^2_{{L}_i^{\frac{2q}{q-1}}(X)} 
			=\|\partial_t^{s'}  \nabla_i^{k'+1} u\|^2_{{L}_i^{\frac{2n}{n-2}} (X)} 
			\| w\|^2_{{L}_i^{n}(X)} \leq 
		\end{equation}
		\[
		\tilde c \left( \|\partial_t^{s'}  \nabla_i^{k'+2} u\|_{{L}^{2}_i (X)} 
		+ \|\partial_t^{s'} u\|_{{L}^{2}_i (X)} \right)^2 \cdot
		\]
		\[
		\left( \| \nabla_i^{2s'+k+2} w\|^{\frac{n-2}{2(2s'+k+2)}}
		_{{L}_i^2 (X)}\|w\|^{\frac{2(2s'+k)-n+6}{2(2s'+k+2)}}_{{L}_i^2 (X)}  + \|w\|_{{L}_i^2 (X)} \right)^2 \leq 
		\]
		\begin{equation*}
			c \|\partial_t^{s'} %\nabla^{k'+2} u\|^2_{{L}^{2} (X_T)} 
			u\|^2_{{H}^{k'+2}_i (X)} 
			\|  w\|^{2}
			_{{H}^{2s'+k+2}_i (X)}
		\end{equation*}
		with positive constants $\tilde c$, $ c$  independent on $u,w$.
		
		Again, similarly to \eqref{eq.cont.kj}, 
		if 
		\begin{equation*} %\label{eq.cont.q.sj}
			\begin{array}{ll}
				1\leq l \leq k'\leq k_0, &
				1\leq j' \leq s', \\
				0\leq j \leq k'+1 - l, &
				0\leq m \leq l,
			\end{array}
		\end{equation*}
		then we may apply \eqref{eq.L-G-N} 
		to each factor in the typical summand 
		\begin{equation*} 
			\|\partial_t ^{s'-j'} \nabla_i^{j} u\|^2_{{L}_i^{2q} (X)} 
			\|\partial_t ^{j'}  \nabla_i^m w\|^2_{{L}_i^{\frac{2q}{q-1}}(X)}
		\end{equation*}
		with entries satisfying 
		\begin{equation} \label{eq.cont.q.sj}
			\left\{
			\begin{array}{lll}
				\frac{1}{2q} = \frac{j}{n} + \Big(\frac{1}{2} -\frac{k_0+2+2j'}{n}\Big) \alpha_1 + 
				\frac{1-\alpha_1}{2}, \\
				\frac{q-1}{2q} = \frac{m}{n} + \Big(\frac{1}{2} -\frac{k+2+2(s'-j')}{n} \Big) \alpha_2 + 
				\frac{1-\alpha_2}{2}, \\
				\frac{j}{k_0+2+2j'} \leq \alpha_1< 1, 
				\frac{m}{k+2+2(s'-j')} \leq \alpha_2< 1. 
			\end{array}
			\right.
		\end{equation}
		Relations \eqref{eq.cont.q.sj} are actually equivalent to the following:
		\begin{equation} \label{eq.cont.alpha.1.sj}
			\frac{j}{k_0+2+2j'} 
			\leq \alpha_1 =\frac{n}{k_0+2+2j'} \Big(\frac{1}{2}- \frac{1}{2q}+ \frac{j}{n}\Big)<1,
		\end{equation}
		\begin{equation} \label{eq.cont.alpha.2.sj}
			\frac{m}{k+2+2(s'-j')} \leq \alpha_2 =  \frac{n}{k+2+2(s'-j')} \Big(\frac{1}{2q} + 
			\frac{m}{n}\Big) <1 . 
		\end{equation}
		The lower bounds are always true if $q>1$ and so, 
		these inequalities are reduced to
		\begin{equation*}
			\frac{1}{2} + \frac{j- k_0-2-2j'}{n} 
			< \frac{1}{2q}< \frac{k+2+2(s'-j')-m}{n}, \, \, q>1.
		\end{equation*}
		The segment for $\frac{1}{2q}$ is  not empty 
		because 
		\begin{equation*}
			\frac{1}{2} + \frac{j- k_0-2-2j'}{n} 
			< \frac{k+2+2(s'-j')-m}{n},
		\end{equation*}
		provided by the assumptions $k>n/2-2(s'+1)-1$, $0\leq k'\leq k_0$ and $j+m\leq k' + 1$. Moreover, as 
		\begin{equation*}
			\frac{1}{2} + \frac{j- k_0-2-2j'}{n} < \frac{1}{2}, \,\, \frac{k+2+2(s'-j')-m}{n}>0,
		\end{equation*}
		we see that there is a proper $q>1$ to achieve 
		\eqref{eq.cont.alpha.1.sj}, \eqref{eq.cont.alpha.2.sj}.
		
		Then, similarly to \eqref{eq.cont.k},
		\begin{equation} \label{eq.cont.kj.s} 
			\|\partial_t ^{s'-j'} \nabla_i^{j} u\|^2_{{L}_i^{2q} (X)} 
			\|\partial_t ^{j'}  \nabla_i^m w\|^2_{{L}_i^{\frac{2q}{q-1}}(X)} \leq
		\end{equation} 
		\[
		\tilde{c}
		\left( \|\partial_t^{s'-j'} \nabla_i^{k_0+2+2j'} u\|^{\alpha_1}_{{L}_i^{2} (X)} 
		\|\partial_t^{s'-j'} u\|^{1-\alpha_1}_{{L}^{2}_i(X)}
		+ \|\partial_t^{s'-j'} u\|_{{L}^{2}_i(X)}\right)^2 \cdot
		\]
		\[
		\left( \|\partial_t^{j'} \nabla_i^{k+2+2(s'-j')} w\|^{\alpha_2}_{{L}_i^{2} (X)}
		\|\partial_t^{j'} w\|^{1-\alpha_2}_{{L}^{2}_i(X)}
		+ \|\partial_t^{j'} w\|_{{L}_i^{2}(X)}\right)^2  \leq 
		\]
		\[
		c
		\| \partial_t^{s'-j'}  u\|^2_{{H}_i^{k_0+2+2j'} (X)} 
		\|\partial_t^{j'} w\|^2_{{H}^{k+2+2(s'-j')} (X)} 
		\]
		with positive constants $ \tilde{c} $, $c$  independent on $u,w$.
		
		Hence, \eqref{eq.cont.ks}, \eqref{eq.cont.kj.s} yield 
		\begin{equation} \label{eq.B.cont.6.sj}
			\|\partial_t^{s'} \B(w,u)\|^2_{C(I,{H}^k_i (X))}
			\leq 
		\end{equation}
		\[
		\sum_{j'=0}^{s'}\left( c_{j'}\| \partial_t^{s'-j'} u\|^2_{C(I,{H}_i^{k+2+2j'} (X))} 
		\|\partial_t^{j'} w\|^2_{C(I,{H}_i^{k+2+2 (s'-j')}(X))} +\right. 
		\]
		\[
		\left. \tilde c_{j'}\|\partial_t^{s'-j'} w\|^2_{C(I,{H}_i^{k+2+2j'} (X))} 
		\|\partial_t^{j'} u\|^2_{C(I,{H}_i^{k+2 +2 (s'-j')}(X))}  \right) , 
		\]
		\begin{equation} \label{eq.B.cont.7.sj}
			\| \partial_t^{s'} \B(w,u)\|^2_{L^2(I,{H}_i^{k+1} (X))}
			\leq 
		\end{equation}
		\[
		\sum_{j'=0}^{s'}\left( c_{j'}\| \partial_t^{s'-j'} u\|^2_{L^2(I,{H}_i^{k+3+2j'} (X))} 
		\|\partial_t^{j'} w\|^2_{L^2(I,{H}_i^{k+2+2 (s'-j')}(X))} +\right. 
		\]
		\[
		\left. \tilde c_{j'}\|\partial_t^{s'-j'} w\|^2_{L^2(I,{H}_i^{k+2+2j'} (X))} 
		\|\partial_t^{j'} u\|^2_{L^2(I,{H}_i^{k+ 3 +2 (s'-j')}(X))}  \right) , 
		\]
		with positive constants $c_{j'}$ and $ \tilde c_{j'} $, $ 0\leq j'\leq s' $, independent on $u$, $v$. 
		
		Now \eqref{eq.B.cont.6.sj}, \eqref{eq.B.cont.7.sj} imply that the mapping 
		$\B (w,\cdot)$ maps $ B^{k+2,2s',s'}_{{i},\mathrm{for}} (X_T)$ 
		continuously to $B^{k,2s',s'}_{{i},\mathrm{for}} (X_T)$ if $n\geq 3$. 
		Moreover, by \eqref{eq.Young}, bound  
		\eqref{eq.B.pos.bound} holds true for $s=s'+1$.
		This finishes the proof of inequality \eqref{eq.B.pos.bound} 
		and the continuity of operator $\B (w,\cdot) :
		B^{k+2,2(s-1),s-1}_{{i},\mathrm{vel}} (X_T)
		\to  B^{k,2(s-1),s-1}_{{i},\mathrm{for}} (X_T)$,
		for $n\geq 3$ and for  all   $k \in {\mathbb Z}_+$ and  $s \in \mathbb N$ satisfying 
		$2s+k>n/2-1$. 
		
		The boundedness of the operator
		$
		\B (w,\cdot) :
		B^{k,2s,s}_{{i},\mathrm{vel}} (X_T)
		\to
		B^{k,2(s-1),s-1}_{{i},\mathrm{for}} (X_T)
		$
		now follows from the definition of the spaces.
	\end{proof}
	
	Let us introduce now the Helmholtz type projection 
	$\mathrm {P}^i$ from $ B^{k,2(s-1),s-1}_{i,\mathrm {for}} (X_T)$ to the kernel of 
	operator $ (A^{i-1})^* $.
	
	\begin{lemma}\label{proector} 
		Let $ s $, $ k \in \mathbb{Z}_+ $. 
		For each $i$ the pseudo-differential operator $ \mathrm {P}^i = (A ^i)^* A^i \varphi^i + \Pi^i$ on X induce continuous map
		\begin{equation}\label{cont.proetor}
			\mathrm {P}^i: B^{k,2(s-1),s-1}_{i,\mathrm {for}} (X_T)\to B^{k,2(s-1),s-1}_{i, \mathrm {vel}} (X_T),
		\end{equation}
		such that
		\begin{equation*}%\label{key}
			\mathrm {P}^i\circ \mathrm {P}^i u = \mathrm {P}^i u,\quad
			(\mathrm {P}^i u, v)_{L^2_i(X)} = (u, \mathrm {P}^i v)_{L^2_i(X)},\quad
			(\mathrm {P}^i u, (I-\mathrm {P}^i) u)_{L^2_i(X)} = 0
		\end{equation*}
		for all $ u,v\in B^{k,2(s-1),s-1}_{i,\mathrm {for}} $.
	\end{lemma}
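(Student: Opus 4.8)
The plan is to regard $\mathrm{P}^i$ as a purely spatial pseudo-differential operator and to split the argument into an analytic part (continuity of the induced map together with membership in the constrained velocity spaces) and an algebraic part (the three projection identities). To prepare both, I first note that $\varphi^i$ has order $-2$ while $(A^i)^* A^i$ is a differential operator of order $2$, so that $\mathrm{P}^i = (A^i)^* A^i \varphi^i + \Pi^i$ is a classical pseudo-differential operator of order $0$ on $X$ (its summand $\Pi^i$ being smoothing). Hence $\mathrm{P}^i$ is bounded on $H^s_i(X)$ for every $s$, by the standard $H^s$-continuity of zero-order operators used in the proof of Lemma~\ref{equivalent}. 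Writing $Q = (A^i)^* A^i$ and $R = A^{i-1}(A^{i-1})^*$, the relation $A^i A^{i-1} = 0$ gives $QR = (A^i)^*(A^i A^{i-1})(A^{i-1})^* = 0$ and, by taking formal adjoints, $RQ = 0$; thus $Q$ and $R$ each commute with $\Delta^i = Q + R$ and therefore with its Green operator $\varphi^i$. Since $\mathrm{P}^i$ is built from $\Delta^i$, $\varphi^i$, $\Pi^i$ and $Q$, it commutes with $\Delta^i$, hence with every power $\nabla^s_i = (\Delta^i)^{s/2}$; and as its coefficients do not depend on $t$, it commutes with $\partial_t$.

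Granting these commutations, continuity of (\ref{cont.proetor}) follows by bookkeeping: for each admissible pair $(m,j)$ one has $\nabla^m_i \partial^j_t \mathrm{P}^i u = \mathrm{P}^i \nabla^m_i \partial^j_t u$, so each seminorm of $\mathrm{P}^i u$ is dominated by the corresponding seminorm of $u$ using the boundedness of $\mathrm{P}^i$ on the fibre space $L^2_i(X)$, uniformly in $t$; the $C(I,\cdot)$ and $L^2(I,\cdot)$ structure is preserved because $\mathrm{P}^i$ acts only in the space variables. To see that the image lies in the velocity space, i.e. satisfies the defining constraint of the spaces $V^s_i$, I would use $(A^{i-1})^*(A^i)^* = (A^i A^{i-1})^* = 0$ together with $(A^{i-1})^* \Pi^i = 0$ (harmonic fields are co-closed) to get $(A^{i-1})^* \mathrm{P}^i = 0$; combined with the commutation above this yields $(A^{i-1})^* \nabla^m_i \partial^j_t \mathrm{P}^i u = (A^{i-1})^* \mathrm{P}^i \nabla^m_i \partial^j_t u = 0$, so every derivative of $\mathrm{P}^i u$ lands in the kernel of $(A^{i-1})^*$, as required.

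For the algebraic identities the decisive computation is $Q\varphi^i Q = Q$, which I would extract from the parametrix relation $\varphi^i \Delta^i = I - \Pi^i$ by multiplying on the left by $Q$: since $A^i \Pi^i = 0$ forces $Q\Pi^i = 0$, and since $Q\varphi^i R = \varphi^i Q R = 0$, only $Q\varphi^i Q = Q$ survives. This gives at once $(Q\varphi^i)^2 = (Q\varphi^i Q)\varphi^i = Q\varphi^i$, while $Q\varphi^i \Pi^i = \varphi^i Q \Pi^i = 0$ and $\Pi^i Q\varphi^i = 0$ (because $\Pi^i(A^i)^* = 0$); expanding $(\mathrm{P}^i)^2 = (Q\varphi^i + \Pi^i)^2$ and using $(\Pi^i)^2 = \Pi^i$ then delivers $(\mathrm{P}^i)^2 = \mathrm{P}^i$. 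Self-adjointness follows because $\Pi^i$ is an orthogonal projection, $Q$ is formally self-adjoint, and the Green operator $\varphi^i$ may be taken self-adjoint and commuting with $Q$, so that $(Q\varphi^i)^* = \varphi^i Q = Q\varphi^i$. The orthogonality relation is then formal: $(\mathrm{P}^i u, (I - \mathrm{P}^i)u) = (u, \mathrm{P}^i(I - \mathrm{P}^i)u) = (u, (\mathrm{P}^i - (\mathrm{P}^i)^2)u) = 0$.

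The order count and the norm bookkeeping are routine. The steps demanding the most care are, first, the passage from commutation with $\Delta^i$ to commutation with the half-integer powers $\nabla^s_i = (\Delta^i)^{s/2}$ entering the norms, which I would justify through the spectral calculus of the elliptic operator $\Delta^i$, whose eigenspaces $\mathrm{P}^i$ manifestly preserves; and, second, the normalization of the parametrix $\varphi^i$ to the genuine self-adjoint Green operator with $\varphi^i \Pi^i = \Pi^i \varphi^i = 0$, which is exactly what makes the self-adjointness and idempotency of $\mathrm{P}^i$ hold on the nose rather than merely modulo smoothing operators.
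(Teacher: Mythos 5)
Your proof is correct and follows essentially the same route as the paper's: both rest on the algebraic identities coming from $A^i\circ A^{i-1}=0$ together with the Hodge parametrix relation $\Delta^i\varphi^i = I-\Pi^i$ for the idempotency, and on the commutation of $\mathrm{P}^i$ with $\nabla^m_i$ and $\partial_t$ (plus Theorem~\ref{l.NS.cont.0}) for the continuity of \eqref{cont.proetor}. The only divergence is in the symmetry identity: the paper obtains $(\mathrm{P}^i u,v)_{L^2_i(X)}=(u,\mathrm{P}^i v)_{L^2_i(X)}$ by rewriting $\mathrm{P}^i = I - A^{i-1}(A^{i-1})^*\varphi^i$ and using $(A^{i-1})^*\mathrm{P}^i=0$, which sidesteps the self-adjointness of $\varphi^i$ and its commutation with $(A^i)^*A^i$ that your argument relies on (and which you correctly flag as requiring the Green-operator normalization of the parametrix).
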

	\begin{proof}%\label{key}
		Indeed, as $A^{i+1} \circ A^i =0$, 
		\[
		\mathrm {P}^i\circ \mathrm {P}^i u = ((A ^i)^* A^i \varphi^i  + \Pi^i )\circ ((A ^i)^* A^i \varphi^i + \Pi^i ) u = 
		\]
		\[
		((A ^i)^* A^i \varphi^i (A ^i)^* A^i\varphi^i  + \Pi^i )u = ((A ^i)^* A^i \Delta^i\varphi^i  + \Pi^i )u.
		\]
		It follows from Theorem \ref{parametrix.laplas} that
		\[
		\mathrm {P}^i = I - A ^{i-1} (A^{i-1})^* \varphi^i,
		\]
		and then
		\[
		(\mathrm {P}^i u, v)_{L^2_i(X)} = (\mathrm {P}^i  u, \mathrm {P}^i v + A ^{i-1} (A^{i-1})^* \varphi^i v)_{L^2_i(X)} = (\mathrm {P}^i  u, \mathrm {P}^i v)_{L^2_i(X)} = 
		\]
		\[
		(u - A ^{i-1} (A^{i-1})^* \varphi^i u, \mathrm {P}^i v)_{L^2_i(X)} = ( u, \mathrm {P}^i v)_{L^2_i(X)},
		\]
		because  $(A^{i-1})^* \mathrm {P}^i = 0$. On the other hand,
		\[
		(\mathrm {P}^i u, (I-\mathrm {P}^i) u)_{L^2_i(X)} = (\mathrm {P}^i u, u)_{L^2_i(X)} - (\mathrm {P}^i u, \mathrm {P}^i u)_{L^2_i(X)} = 0.
		\]
		
		Finally, the continuity of $ \mathrm {P}^i: B^{k,2(s-1),s-1}_{i,\mathrm {for}} (X_T)\to B^{k,2(s-1),
			s-1}_{i, \mathrm {vel}} (X_T) $ follows from Theorem \ref{l.NS.cont.0} and 
		the commutative relations $ \mathrm {P}^i\partial_t^j = \partial_t^j \mathrm {P}^i$ with 
		$ j\leq s-1  $.
	\end{proof}
	
	\section{An open mapping theorem}
	
	Consider now the linearisation of problem (\ref{eq3}): given sufficiently regular section $f,w$ of the induced bundle $E_i (t)$
	and section $u_0$ of the bundle $E_i$, find sufficiently regular  sections
	$u$ and $p $ of the induced bundles $E_i (t)$ and $E_{i-1} (t)$
	which satisfy
	\begin{equation}\label{eq.NS.lin}
		\begin{cases}
			\partial_t v + \mu \Delta^{i} v + \B (w,v) + A^{i-1} p = f& \text{in } X_T,\\
			(A^{i-1})^* v =0, & \text{in } X_T,\\
			(A^{i-2})^* p =0 & \text{in } X_T,\\
			v(x,0) = v_0& \text{in } X,
		\end{cases}
	\end{equation}
	
	Now we want to show that \eqref{eq.NS.lin} has one and only one solution in the spaces, 
	introduced in first paragraph. We start with the following simple 
	corollary of the Hodge Theorem \ref{parametrix.laplas}.
	
	\begin{corollary}\label{p.nabla.Bochner} 
		Let $F\in B^{k,2(s-1),s-1}_{i, \mathrm {for}} (X_T)$ 
		satisfy $\mathrm {P}^i F=0$ in $X_T$.  
		Then there is a unique section 
		$p \in B^{k+1,2(s-1),s-1}_{i-1,\mathrm {pre}}(X_T)$
		such that (\ref{eq.p.ort.0}) holds
		and 
		\begin{equation}\label{eq.nabla.Bochner}
			A^{i-1} p = F \mbox{ in } X \times [0,T].
		\end{equation}
	\end{corollary}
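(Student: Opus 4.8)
The plan is to produce $p$ explicitly from the parametrix $\varphi^i$ and then read off every required property from Theorem \ref{parametrix.laplas} and Lemma \ref{proector}. Concretely, I would set
\[
p := (A^{i-1})^* \varphi^i F .
\]
By Lemma \ref{proector} the Helmholtz projection satisfies $\mathrm{P}^i = I - A^{i-1}(A^{i-1})^*\varphi^i$, whence $A^{i-1} p = (I-\mathrm{P}^i)F = F - \mathrm{P}^i F = F$, because $\mathrm{P}^i F = 0$ by hypothesis. This already gives \eqref{eq.nabla.Bochner}, so the whole problem is reduced to checking that this candidate satisfies the side conditions and lies in the right space.

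Next I would verify the two algebraic constraints. Since $\{A^i\}$ is a complex, $A^{i-1}\circ A^{i-2} = 0$, and passing to formal adjoints yields $(A^{i-2})^*(A^{i-1})^* = 0$; applied to $\varphi^i F$ this gives $(A^{i-2})^* p = 0$. For the orthogonality \eqref{eq.p.ort.0}, take any $h \in \mathcal{H}^{i-1}$; by the definition \eqref{key655} of the formal adjoint and the description \eqref{eq20} of the harmonic space,
\[
(p,h)_{L^2_{i-1}(X)} = \big((A^{i-1})^*\varphi^i F,\, h\big)_{i-1} = \big(\varphi^i F,\, A^{i-1} h\big)_{i} = 0,
\]
since $A^{i-1} h = 0$ for harmonic $h$.

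The membership $p \in B^{k+1,2(s-1),s-1}_{i-1,\mathrm{pre}}(X_T)$ is the only point requiring the function-space machinery. By Theorem \ref{parametrix.laplas} the operator $\varphi^i$ is bounded $H^s_i(X)\to H^{s+2}_i(X)$, while the first order operator $(A^{i-1})^*$ is bounded $H^{s+2}_i(X)\to H^{s+1}_{i-1}(X)$; hence $(A^{i-1})^*\varphi^i$ gains one spatial derivative. Both operators act only in the spatial variables, so they commute with $\partial_t$ and with the projections entering the norms \eqref{norm.sobol1}. Consequently, starting from $F$ in the base space $C(I, H^{2(s-1)+k}_i(X)) \cap L^2(I, H^{2(s-1)+k+1}_i(X))$ together with all admissible derivatives $\nabla_i^m \partial_t^j F$, one obtains that $p$ enjoys the corresponding regularity with the spatial index raised by one, i.e. $p \in C(I, H^{2(s-1)+k+1}_{i-1}(X)) \cap L^2(I, H^{2(s-1)+k+2}_{i-1}(X))$, and likewise for its derivatives. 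Since in addition $A^{i-1} p = F \in B^{k,2(s-1),s-1}_{i,\mathrm{for}}(X_T)$, the norm $\|p\|_{B^{k+1,2(s-1),s-1}_{i-1,\mathrm{pre}}(X_T)} = \|F\|_{B^{k,2(s-1),s-1}_{i,\mathrm{for}}(X_T)}$ is finite and $p$ belongs to the asserted space.

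Finally, for uniqueness I would suppose $p_1, p_2$ both satisfy the conclusion and set $q := p_1 - p_2$. Then $A^{i-1} q = 0$ and $(A^{i-2})^* q = 0$ pointwise in $t$, so $q(\cdot, t) \in \mathcal{H}^{i-1}$ for each $t$ by \eqref{eq20}; but \eqref{eq.p.ort.0} forces $q(\cdot,t)$ to be orthogonal to $\mathcal{H}^{i-1}$, and testing against $h = q(\cdot,t)$ gives $q \equiv 0$. The main (and essentially only) obstacle is the bookkeeping in the previous paragraph, namely verifying that the entire ladder of mixed space--time derivatives in the Bochner--Sobolev norm is preserved by $(A^{i-1})^*\varphi^i$; this, however, reduces completely to the mapping properties of $\varphi^i$ in Theorem \ref{parametrix.laplas} and the fact that spatial operators commute with $\partial_t$.
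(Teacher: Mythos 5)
Your proposal is correct and follows essentially the same route as the paper: the same explicit candidate $p=(A^{i-1})^{*}\varphi^{i}F$, the same use of $\mathrm{P}^{i}=I-A^{i-1}(A^{i-1})^{*}\varphi^{i}$ to get $A^{i-1}p=F$, the same orthogonality computation against $\mathcal{H}^{i-1}$, and the same uniqueness argument via the harmonic space. Your treatment of the Bochner--Sobolev membership is in fact slightly more detailed than the paper's, which simply asserts it.
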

	\begin{proof}%\label{key}
		%	We start with the following Lemma.
		Under the hypotheses of Theorem, the section
		\[
		p = (A^{i-1})^*\varphi^i F
		\]
		is a solution of (\ref{eq.nabla.Bochner}). Indeed,
		\[
		A^{i-1}p = A^{i-1}(A^{i-1})^*\varphi^i F,
		\]
		but it follows from Theorem \ref{parametrix.laplas} and Lemma \ref{proector} that
		\[
		F = A ^{i-1} (A^{i-1})^* \varphi^i F + \mathrm {P}^i F = A ^{i-1} (A^{i-1})^* \varphi^i F,
		\]
		because $  \mathrm {P}^i F = 0 $, then $ A^{i-1}p = F $ and $ (A^{i-2})^* p =0 $ by the construction of the solution. Moreover
		\begin{equation*}%\label{eq.p.ort.0}
			( p,h)_{L^2_{i-1}(X)} = ((A^{i-1})^*\varphi^i F,h)_{L^2_{i-1}(X)} = (\varphi^i F, A^{i-1} h)_{L^2_{i}(X)} = 0
		\end{equation*}
		for all $h \in \mathcal{H}^{i-1} $, hence $ p\in B^{k+1,2(s-1),s-1}_{i-1,\mathrm {pre}}(X_T) $.
		
		Let now $ p_1, p_2\in B^{k+1,2(s-1),s-1}_{i-1,\mathrm {pre}}(X_T) $ are two solutions of (\ref{eq.nabla.Bochner}). Then $ p = p_1 - p_2 $ is solution too and $ A^{i-1}p =0 $. Since $ p\in B^{k+1,2(s-1),s-1}_{i-1,\mathrm{pre}}(X_T) $, then
		$ (A^{i-2})^*p=0 $ and $ p $ actually belong to the harmonic space 
		$ \mathcal{H}^{i-1} $. 
		However, as
		\[
		( p,h)_{L^2_{i-1}(X)} =0
		\]
		for all $h \in \mathcal{H}^{i-1} $, then $ p\equiv 0 $.
	\end{proof}
	
	Let us recall the following useful lemma by J.-L. Lions.
	\begin{lemma}
		\label{l.Lions}
		Let $V$, $H$ and $V'$ be Hilbert spaces such that $V'$ is the dual to $V$ and the embeddings
		$
		V \subset H \subset V'
		$
		are continuous and everywhere dense.
		If
		$u \in L^2 (I,V)$ and
		$\partial_t u \in L^2 (I,V')$
		then
		\begin{equation}
			\label{eq.dt}
			\frac{d}{dt} \| u (\cdot, t) \|^2_{H}
			= 2\, \langle \partial_t u, u \rangle
		\end{equation}
		and $u$ is equal almost everywhere to a continuous mapping from $[0,T]$ to $H$.
	\end{lemma}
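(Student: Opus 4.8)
The plan is to reduce the identity to the elementary product rule for smooth $V$-valued curves and then to pass to the limit by a density argument in the space
\[
W(I) = \{u \in L^2(I,V) : \partial_t u \in L^2(I,V')\}, \quad \|u\|_{W(I)}^2 = \|u\|_{L^2(I,V)}^2 + \|\partial_t u\|_{L^2(I,V')}^2,
\]
which is itself a Hilbert space. I first fix the Gelfand triple structure: identifying $H$ with its dual via the Riesz isomorphism, one has $\langle h, \phi\rangle = (h,\phi)_H$ for every $h \in H$ and $\phi \in V$, so the duality pairing $\langle \cdot, \cdot \rangle$ between $V'$ and $V$ extends the inner product of $H$. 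For a curve $u \in C^1(\overline{I}; V)$ the scalar function $\tau \mapsto (u(\tau),u(\tau))_H$ is continuously differentiable, and since $\partial_t u(\tau) \in V \subset H$ the chain rule together with the above compatibility yields
\[
\frac{d}{dt}\|u(t)\|_H^2 = 2(\partial_t u(t), u(t))_H = 2\langle \partial_t u(t), u(t)\rangle,
\]
which is exactly \eqref{eq.dt} in the smooth case; integrating gives the equivalent integrated identity on any subinterval $[s,t] \subset I$.

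Next I would establish that $C^1(\overline{I};V)$ is dense in $W(I)$. To this end I extend a given $u \in W(I)$ to a compactly supported $\tilde u \in W(\R)$ by a bounded extension operator (first extending across the endpoints of $I$ by reflection and multiplying by a cut-off), and then mollify in time, setting $u_\varepsilon = \rho_\varepsilon * \tilde u$ with a standard mollifier $\rho_\varepsilon$. Since convolution in $t$ commutes with the distributional time derivative and leaves the $V$- and $V'$-valued structure intact, each $u_\varepsilon$ is smooth in $t$ with values in $V$ and $u_\varepsilon \to u$ in $W(I)$ as $\varepsilon \to 0$. Applying the integrated identity from the first step to the difference $u_\varepsilon - u_\delta$ and integrating the resulting relation in $s$ over $I$, I obtain a uniform bound
\[
\sup_{t \in \overline{I}} \|u_\varepsilon(t) - u_\delta(t)\|_H^2 \le C\,\|u_\varepsilon - u_\delta\|_{W(I)}^2,
\]
with $C$ depending only on $T$ and the embedding constants. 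Hence $(u_\varepsilon)$ is Cauchy in $C(\overline{I};H)$ and converges there to a continuous limit which agrees with $u$ almost everywhere; this provides the continuous $H$-valued representative asserted in the Lemma.

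Finally, passing to the limit $\varepsilon \to 0$ in the integrated identity for $u_\varepsilon$ — both sides being continuous with respect to the $W(I)$-norm in view of the estimate above and the continuity of $\langle \cdot, \cdot \rangle$ — I recover
\[
\|u(t)\|_H^2 - \|u(s)\|_H^2 = 2\int_s^t \langle \partial_t u(\tau), u(\tau)\rangle\, d\tau
\]
for all $s, t \in \overline{I}$, with $\|u(\cdot)\|_H^2$ understood through the continuous representative. Differentiating this absolutely continuous function of $t$ gives \eqref{eq.dt} for almost every $t$. I expect the main obstacle to be the density step: one must construct the time-extension carefully so that it is bounded simultaneously on $L^2(I,V)$ and on the derivative in $L^2(I,V')$, and verify that mollification does not destroy the $V$-valued regularity. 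The uniform estimate $\sup_t \|u_\varepsilon - u_\delta\|_H \le C\|u_\varepsilon - u_\delta\|_{W(I)}$, which upgrades $L^2$-convergence to uniform convergence into $H$, is the crux that delivers both the formula and the continuous representative at once.
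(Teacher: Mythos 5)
Your argument is correct and is essentially the standard proof of this lemma found in the reference the paper cites (Temam, Ch.~III, \S~1, Lemma~1.2): the paper itself gives no proof beyond that citation, and your route --- the product rule for smooth $V$-valued curves, density of smooth functions in $W(I)$ via extension and mollification, the uniform bound $\sup_t\|\cdot\|_H \leq C\|\cdot\|_{W(I)}$, and passage to the limit --- is exactly that classical argument. No gaps.
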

	
	\begin{proof}
		See \cite[Ch.~III, \S~1, Lemma~1.2]{Temam79}.
	\end{proof}
	
	\begin{theorem}
		\label{t.exist.NS.lin.weak}
		Let $n\geq 2$ and suppose that $w \in
		L^2 (I,V_i^1) \cap 
		L^{2}(I,{L}_{{i}}^{\infty} (X)) \cap 
		L^{\infty}(I,{L}_{{i}}^{n} (X)) $. 
		Given any pair $(f,u_0) \in L^2 (I,(V^1_i)') \times V^0_i$, there is a unique section
		$u \in C (I,V_i^0) \cap L^2 (I,V_i^1)$ with
		$\partial_t u \in L^2 (I,(V_i^1)')$,
		satisfying
		\begin{equation}
			\label{eq.NS.lin.weak}
			\left\{
			\begin{array}{rcl}
				\displaystyle
				\frac{d}{dt} (u,v)_{{L}_{{i}}^2(X)}
				+ \mu (A^i u, A^i v)
				_{{L}_{{i+1}}^2(X)}
				& =
				& \langle f - \B (w,u), v \rangle,
				\\
				u (\cdot,0)
				& =
				& u_0
			\end{array}
			\right.
		\end{equation}
		for all $v \in V_i^k$ with $ k\geq n/2 $.
	\end{theorem}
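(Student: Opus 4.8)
The plan is to construct $u$ by the Faedo-Galerkin method and to control it through energy estimates, exactly as in the classical treatment of linear parabolic problems in \cite{Lion69}, \cite{Temam79}; the only genuinely new ingredient is the estimate of the bilinear term $\B(w,\cdot)$ adapted to the complex $\{A^i,E^i\}$. First I would fix a Galerkin basis. Since $\Delta^i$ is self-adjoint, strongly elliptic and Fredholm (Theorem \ref{parametrix.laplas}), it has a discrete spectrum and a complete orthonormal system of smooth eigensections of $L^2_i(X)$; using the Hodge decomposition I would retain those eigensections $\{e_j\}$ lying in $\ker (A^{i-1})^*$, which form an orthonormal basis of the $L^2$-closure of $V^0_i$ and satisfy $(A^{i-1})^* e_j = 0$. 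Writing $u_m(t)=\sum_{j=1}^m g_{jm}(t)\,e_j$ and testing \eqref{eq.NS.lin.weak} against $e_1,\dots,e_m$ produces a linear system of ordinary differential equations for the $g_{jm}$ with coefficients that are merely measurable in $t$ (they involve $w$); Carath\'eodory's theorem gives a local absolutely continuous solution, and the a priori bound below makes it global on $[0,T]$. Each $u_m$ remains in $V^1_i$ because $(A^{i-1})^* e_j=0$.

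The heart of the matter is the energy estimate obtained by taking $v=u_m$. Here the component $A^{i-1}\big(M_{i,2}(w,u_m)+M_{i,2}(u_m,w)\big)$ of $\B(w,u_m)$ pairs to zero against $u_m$, since $(A^{i-1}\Phi,u_m)_{L^2_i}=(\Phi,(A^{i-1})^*u_m)_{L^2_{i-1}}=0$ by the constraint. For the two surviving terms I would not estimate $M_{i,1}(A^iw,u_m)$ directly, for the derivative sits on $w$, which is assumed only to be integrable and not differentiable; instead I would integrate by parts, transferring $A^i$ onto the adjoint,
\begin{equation*}
\big(M_{i,1}(A^iw,u_m),u_m\big)_{L^2_i}=\big(w,(A^i)^*Q(u_m,u_m)\big)_{L^2_i},
\end{equation*}
where $Q$ is a zero-order bilinear operator determined fibrewise by the metric. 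Because $(A^i)^*$ is a first order operator and $(A^{i-1})^*u_m=0$, the Leibniz rule yields the pointwise bound $|(A^i)^*Q(u_m,u_m)|\le C(|u_m|\,|A^iu_m|+|u_m|^2)$, so that, together with \eqref{eq9} and the analogous treatment of $M_{i,1}(A^iu_m,w)$, one obtains
\begin{equation*}
|\langle \B(w,u_m),u_m\rangle|\le C\!\int_X |w|\,\big(|u_m|\,|A^iu_m|+|u_m|^2\big)\,dx.
\end{equation*}
Applying H\"older's inequality, the Sobolev embedding $H^1_i\hookrightarrow L^{2n/(n-2)}_i$ (via Lemma \ref{equivalent}), and the Gagliardo-Nirenberg inequalities together with the Serrin-type hypotheses $w\in L^2(I,L^\infty_i)\cap L^\infty(I,L^n_i)$, and finishing with Young's inequality to absorb the top-order factor, I would arrive at
\begin{equation*}
|\langle \B(w,u_m),u_m\rangle|\le \tfrac{\mu}{4}\,\|A^iu_m\|^2_{L^2_{i+1}}+\lambda(t)\,\|u_m\|^2_{L^2_i},\qquad \lambda\in L^1(I).
\end{equation*}

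Combined with the standard bound $|\langle f,u_m\rangle|\le \tfrac{\mu}{4}\|A^iu_m\|^2_{L^2_{i+1}}+C\big(\|u_m\|^2_{L^2_i}+\|f\|^2_{(V^1_i)'}\big)$ and the equivalence of $\|\cdot\|_{V^1_i}$ with $(\|A^i\cdot\|^2_{L^2_{i+1}}+\|\cdot\|^2_{L^2_i})^{1/2}$ on $V^1_i$ (which follows from Lemma \ref{equivalent} and $(A^{i-1})^*u_m=0$), this is precisely a G\aa rding-type inequality, and Lemma \ref{l.Groenwall} then bounds $u_m$ in $L^\infty(I,V^0_i)\cap L^2(I,V^1_i)$ in terms of $\|f\|_{L^2(I,(V^1_i)')}$, $\|u_0\|_{V^0_i}$ and the norms of $w$ alone. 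A uniform bound for $\partial_t u_m$ in $L^2(I,(V^1_i)')$ follows from the equation and the continuity of $\Delta^i$ and $\B(w,\cdot)$ (Theorem \ref{l.NS.cont.0}).

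Finally I would pass to the limit and conclude. Extracting a subsequence with $u_m\rightharpoonup u$ weakly in $L^2(I,V^1_i)$, weakly-$\ast$ in $L^\infty(I,V^0_i)$, and $\partial_t u_m\rightharpoonup\partial_t u$ in $L^2(I,(V^1_i)')$, I would pass to the limit in \eqref{eq.NS.lin.weak}; since the problem is linear in $u$ and $\B(w,\cdot)\colon L^2(I,V^1_i)\to L^2(I,(V^1_i)')$ is bounded (by the estimates above), weak convergence already suffices, the Aubin-Lions lemma supplying strong convergence in $L^2(I,V^0_i)$ should it be needed to identify the initial trace; by density the resulting identity holds for all test sections in $V^1_i$, in particular for $v\in V^k_i$ with $k\geq n/2$. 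Lemma \ref{l.Lions}, applied with $V=V^1_i$ and $H=V^0_i$, then gives $u\in C(I,V^0_i)$ and justifies recovering $u(\cdot,0)=u_0$. Uniqueness follows the same scheme: the difference of two solutions solves \eqref{eq.NS.lin.weak} with $f=0$ and $u_0=0$, so \eqref{eq.dt} and the same energy estimate give $\tfrac{d}{dt}\|u\|^2_{L^2_i}\le \lambda(t)\|u\|^2_{L^2_i}$, whence $u\equiv0$ by Gronwall. The main obstacle is the energy estimate of $\langle\B(w,u_m),u_m\rangle$: carrying out the integration by parts that moves $A^i$ off the non-differentiable factor $w$, exploiting the vanishing of the $A^{i-1}$-component under the constraint, and matching the Gagliardo-Nirenberg exponents to the hypotheses on $w$ so that the resulting coefficient $\lambda$ is integrable in time.
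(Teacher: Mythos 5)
Your proposal is correct and follows exactly the route the paper itself indicates (the paper's proof is a one-line citation to the standard Faedo--Galerkin/energy-estimate/Gagliardo--Nirenberg scheme of Lions, Temam and Ladyzhenskaya, which you carry out in detail). The only slip is the pointwise claim $|(A^i)^*Q(u_m,u_m)|\le C(|u_m|\,|A^iu_m|+|u_m|^2)$ -- the constraint $(A^{i-1})^*u_m=0$ controls the full gradient only through the $L^2$ G\aa{}rding inequality, not fibrewise -- but keeping $|\nabla u_m|$ there and applying H\"older with $\|w\|_{L^\infty_i}$ before invoking G\aa{}rding yields the same integrable coefficient $\lambda(t)$, so the argument stands.
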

	
	\begin{proof}%\label{key}
		The proof is based on the standard method 
		by Faedo-Galerkin, energy estimates and Gagliardo-Nirenberg inequality, see, for instance, \cite{LiMa72}, \cite{Temam79} or \cite{Ladyz}.
	\end{proof}

	\begin{theorem}
		\label{t.exist.NS.lin.strong}
		Let $n \geq 2$, 
		$s \in \mathbb N$, $k \in {\mathbb Z}_+$, $2s+k> n/2$,  
		and
		$w \in B^{k,2s,s}_{i,\mathrm{vel}} (X_T)$.
		Then \eqref{eq.NS.lin} induces a bijective continuous linear mapping
		\begin{equation}
			\label{eq.map.Aw}
			\mathcal{A}_{w,i} :
			B^{k,2s,s}_{i,\mathrm{vel}} (X_T) \times 
			B_{i-1,\mathrm{pre}}^{k+1,2(s-1),s-1} (X_T)
			\to     B_{i,\mathrm{for}}^{k,2(s-1),s-1} (X_T) \times V_i^{2s+k}.
		\end{equation}
		which admits a continuous inverse $\mathcal{A}^{-1}_{w,i}$.
	\end{theorem}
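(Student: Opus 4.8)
The plan is to verify, in turn, that $\mathcal{A}_{w,i}$ is continuous, injective and surjective, and then to deduce the continuity of the inverse. The continuity of $\mathcal{A}_{w,i}$ is the easy part: the first component of $\mathcal{A}_{w,i}(v,p)$ is $\partial_t v + \mu \Delta^{i} v + \B(w,v) + A^{i-1} p$, and by Theorem \ref{l.NS.cont.0} each of the operators $\partial_t$, $\Delta^{i}$ and $\B(w,\cdot)$ maps $B^{k,2s,s}_{i,\mathrm{vel}} (X_T)$ continuously into $B^{k,2(s-1),s-1}_{i,\mathrm{for}} (X_T)$, while $A^{i-1}$ maps $B^{k+1,2(s-1),s-1}_{i-1,\mathrm{pre}} (X_T)$ isometrically into $B^{k,2(s-1),s-1}_{i,\mathrm{for}} (X_T)$ by the very definition of the pressure norm. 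The second component $v\mapsto v(\cdot,0)$ is continuous from $B^{k,2s,s}_{i,\mathrm{vel}} (X_T)$ into $V_i^{2s+k}$. Hence $\mathcal{A}_{w,i}$ is a continuous linear map between the Banach spaces in \eqref{eq.map.Aw}.

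The engine behind both injectivity and surjectivity is the Helmholtz projection $\mathrm{P}^i$ of Lemma \ref{proector}. Since $\mathrm{P}^i$ is the orthogonal projection onto $\ker (A^{i-1})^*$, it annihilates the image of $A^{i-1}$, so $\mathrm{P}^i A^{i-1}p = 0$; and because a solution satisfies $(A^{i-1})^* v = 0$, one has $\mathrm{P}^i \partial_t v = \partial_t v$ and, using $(A^{i-1})^* (A^i)^* = 0$, also $\mathrm{P}^i \Delta^{i} v = \Delta^{i} v$. Applying $\mathrm{P}^i$ to the first line of \eqref{eq.NS.lin} therefore produces the reduced, pressure-free problem $\partial_t v + \mu \Delta^{i} v + \mathrm{P}^i \B(w,v) = \mathrm{P}^i f$ with $v(\cdot,0)=u_0$, whose weak form is precisely \eqref{eq.NS.lin.weak}. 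For injectivity I would note that if $\mathcal{A}_{w,i}(v,p)=(0,0)$ then this reduced problem is homogeneous with zero initial datum, so the uniqueness part of Theorem \ref{t.exist.NS.lin.weak} forces $v=0$; then $A^{i-1}p = -\partial_t v - \mu \Delta^{i} v - \B(w,v) = 0$, and the uniqueness in Corollary \ref{p.nabla.Bochner} gives $p=0$.

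For surjectivity, given $(f,u_0)\in B_{i,\mathrm{for}}^{k,2(s-1),s-1} (X_T) \times V_i^{2s+k}$, I would first produce the velocity $v$ by solving the reduced problem and then recover the pressure. Theorem \ref{t.exist.NS.lin.weak} yields a weak solution $v\in C(I,V_i^0)\cap L^2(I,V_i^1)$, and the crux is to upgrade it to the strong class $v\in B^{k,2s,s}_{i,\mathrm{vel}} (X_T)$. This is a standard but lengthy bootstrap: one applies the mixed derivatives $\nabla^m_i \partial_t^j$ with $m+2j\le 2s$ to the reduced equation, tests against the corresponding derivative of $v$, justifies the energy identities via Lemma \ref{l.Lions}, controls the nonlinear contributions by the bound \eqref{eq.B.pos.bound} of Theorem \ref{l.NS.cont.0}, and closes the resulting differential inequalities with Gronwall's Lemma \ref{l.Groenwall}, using the norm equivalences of Lemma \ref{equivalent} to pass between the Sobolev and Hodge-Laplacian norms. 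Once $v\in B^{k,2s,s}_{i,\mathrm{vel}} (X_T)$ is in hand I set $F = f - \partial_t v - \mu \Delta^{i} v - \B(w,v)\in B^{k,2(s-1),s-1}_{i,\mathrm{for}} (X_T)$; the reduced equation gives $\mathrm{P}^i F = 0$, so Corollary \ref{p.nabla.Bochner} provides a unique $p\in B^{k+1,2(s-1),s-1}_{i-1,\mathrm{pre}} (X_T)$ with $A^{i-1}p = F$, and $(v,p)$ then solves \eqref{eq.NS.lin}.

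Having shown that $\mathcal{A}_{w,i}$ is a continuous linear bijection of Banach spaces, the continuity of $\mathcal{A}^{-1}_{w,i}$ follows at once from the bounded inverse theorem (alternatively, directly from the a priori estimates assembled in the surjectivity step). The main obstacle I anticipate is exactly that regularity upgrade: passing from the weak solution of Theorem \ref{t.exist.NS.lin.weak} to a strong solution in $B^{k,2s,s}_{i,\mathrm{vel}} (X_T)$ demands a carefully ordered hierarchy of energy estimates over all admissible $\nabla^m_i \partial_t^j v$, in which the Gagliardo--Nirenberg-based continuity bounds for $\B(w,\cdot)$ must be combined with Gronwall's inequality level by level; the linear and algebraic steps (continuity, the projection identities, and the recovery of $p$) are by comparison routine.
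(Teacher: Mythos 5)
Your outline reproduces the paper's skeleton faithfully: continuity from Theorem \ref{l.NS.cont.0} and the definition of the pressure norm, injectivity by projecting away the pressure and invoking the uniqueness in Theorem \ref{t.exist.NS.lin.weak} followed by Corollary \ref{p.nabla.Bochner}, surjectivity by first producing the velocity and then recovering $p$ from $A^{i-1}p=(I-\mathrm{P}^i)(f-\B(w,u))$, and finally the bounded inverse theorem. The one place where your plan diverges from the paper, and where it has a genuine gap, is the regularity upgrade that you yourself flag as the crux. You propose to apply $\nabla_i^m\partial_t^j$ to the reduced equation and test against the corresponding derivatives of the weak solution $v$. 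But the weak solution delivered by Theorem \ref{t.exist.NS.lin.weak} lives only in $C(I,V_i^0)\cap L^2(I,V_i^1)$ with $\partial_t v\in L^2(I,(V_i^1)')$, so $\nabla_i^{2r}v$ is not an admissible test function and the pairings you need are not defined; the argument is circular, since it presupposes exactly the regularity it is meant to establish. The paper avoids this by running the entire hierarchy of energy estimates on the Faedo--Galerkin approximations $u_m$ (Lemma \ref{l.Galerkin} and Lemma \ref{l.OM.bound}), which are finite linear combinations of the basis sections $b_j$ and hence may legitimately be substituted into \eqref{eq.NS.lin.weak} together with $\nabla_i^{2r}u_m$; the resulting bounds \eqref{povish} are uniform in $m$, and the strong solution is obtained as a weak/weak-$*$ limit, with Lemma \ref{l.Lions} used only afterwards to upgrade to continuity in time. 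If you want to keep your formulation, you must either route the estimates through the approximations as the paper does, or justify the formal testing by a separate approximation or difference-quotient argument.

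A second, smaller structural difference: you propose to estimate all mixed derivatives $\nabla_i^m\partial_t^j v$ with $m+2j\le 2s$ by energy methods. The paper only does energy estimates for the spatial derivatives at the level $s=1$ (induction on $k$ in Lemma \ref{l.OM.bound}); the time regularity needed for general $s$ is then obtained by induction on $s$, reading $\partial_t u=-\mu\Delta^i u+\mathrm{P}^i(\B(w,u)-f)$ off from the equation \eqref{eq.um.bound.OM.1t} and using the continuity statements of Theorem \ref{l.NS.cont.0} and Lemma \ref{proector} together with the embedding $B^{k,2s',s'}_{i,\mathrm{for}}\times V_i^{2(s'+1)+k}\hookrightarrow B^{k+2,2(s'-1),s'-1}_{i,\mathrm{for}}\times V_i^{2s'+k+2}$. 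This is both shorter and sidesteps the delicate issue of justifying energy identities for time derivatives of the solution. You should adopt this bootstrap-from-the-equation step rather than attempting energy estimates in $\partial_t^j$.
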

	
	Note, if $2s+k >n/2-1$, then the space $ B^{k,2s,s}_{i,\mathrm{vel}} (X_T) $ is continuously embedded into the space $ L^{2}(I,{L}_{{i}}^{\infty} (X)) \cap 
	L^{\infty}(I,{L}_{{i}}^{n} (X)) $. Indeed, it follows from \eqref{eq.Sob.index} that $ L^{2}(I,{H}_{i}^{k+2s+1} (X)) \hookrightarrow  L^{2}(I,{L}_{{i}}^{\infty} (X)) $. On the other hand, \eqref{eq.L-G-N.n} gives $ C(I,{H}_{i}^{k+2s} (X))\hookrightarrow L^{\infty}(I,{L}_{{i}}^{n} (X)) $, then
	\begin{equation}\label{embb}
		B^{k,2s,s}_{i,\mathrm{vel}} (X_T) \hookrightarrow L^{2}(I,{L}_{{i}}^{\infty} (X)) \cap 
		L^{\infty}(I,{L}_{{i}}^{n} (X))
	\end{equation}
	with $2s+k >n/2-1$.
	
	\begin{proof} Again, we may follow the standard scheme of solving 
		nonlinear parabolic equations, 
		see, for instance, \cite{Lion69}, \cite{LiMa72}, \cite{Temam79}, 
		\cite{LadSoUr67} \cite{Ladyz}.
		
		We begin with a simple lemma.
		\begin{lemma} If    $s \in \mathbb N$, $k \in {\mathbb Z}_+$, $2s+k> n/2-1$ 
			then \eqref{eq.map.Aw} is an injective continuous linear mapping.
		\end{lemma}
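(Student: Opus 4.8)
The plan is to establish the three assertions—linearity, continuity, and injectivity—in turn, leaning on the continuity statement of Theorem~\ref{l.NS.cont.0} and on the weak uniqueness of Theorem~\ref{t.exist.NS.lin.weak} rather than on any fresh estimate.

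Linearity of $\mathcal{A}_{w,i}$ is immediate, since every operation entering \eqref{eq.NS.lin}—namely $\partial_t$, $\Delta^{i}$, $\B(w,\cdot)$, $A^{i-1}$ and the initial trace $v\mapsto v(\cdot,0)$—is linear in the pair $(v,p)$. For continuity, write $\mathcal{A}_{w,i}(v,p)=(f,v_0)$ with $f=\partial_t v+\mu\Delta^{i} v+\B(w,v)+A^{i-1}p$ and $v_0=v(\cdot,0)$. The three terms $\partial_t v$, $\mu\Delta^{i} v$, $\B(w,v)$ are bounded in $B^{k,2(s-1),s-1}_{i,\mathrm{for}}(X_T)$ by Theorem~\ref{l.NS.cont.0} (here the hypothesis $2s+k>n/2-1$ is exactly what that theorem requires), while $\|A^{i-1}p\|_{B^{k,2(s-1),s-1}_{i,\mathrm{for}}(X_T)}=\|p\|_{B^{k+1,2(s-1),s-1}_{i-1,\mathrm{pre}}(X_T)}$ holds by the very definition of the pressure norm; hence the first component is continuous. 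The second component is controlled by the embedding $B^{k,2s,s}_{i,\mathrm{vel}}(X_T)\hookrightarrow C(I,V_i^{2s+k})$ built into the definition of the velocity space, which bounds $\|v(\cdot,0)\|_{V_i^{2s+k}}$ by $\|v\|_{B^{k,2s,s}_{i,\mathrm{vel}}(X_T)}$.

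Since $\mathcal{A}_{w,i}$ is linear, injectivity reduces to triviality of its kernel, and the idea is to match this against the weak uniqueness theorem. Suppose $\mathcal{A}_{w,i}(v,p)=(0,0)$, so that $(v,p)$ solves the homogeneous version of \eqref{eq.NS.lin} with $f=0$ and $v_0=0$. I would first check that $v$ is a weak solution in the sense of \eqref{eq.NS.lin.weak}: pairing the first equation in $L^2_i(X)$ with an arbitrary time-independent test field $v'\in V_i^{k'}$, $k'\geq n/2$, the pressure term disappears because $(A^{i-1}p,v')_{L^2_i(X)}=(p,(A^{i-1})^* v')_{L^2_{i-1}(X)}=0$ (using $(A^{i-1})^* v'=0$ for $v'\in V_i^{k'}$), and the Laplacian term reduces to $\mu(A^i v,A^i v')_{L^2_{i+1}(X)}$ because $(A^{i-1})^* v=0$; both identities rest on the adjoint relation \eqref{key655}. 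One must also verify that $w$ satisfies the hypotheses of Theorem~\ref{t.exist.NS.lin.weak}: the inclusions $w\in L^2(I,L^\infty_i)\cap L^\infty(I,L^n_i)$ follow from the embedding \eqref{embb}, and $w\in L^2(I,V_i^1)$ follows from $w\in C(I,V_i^{k+2s})$ with $k+2s\geq 1$. Finally, the strong regularity of $v\in B^{k,2s,s}_{i,\mathrm{vel}}(X_T)$ guarantees $v\in C(I,V_i^0)\cap L^2(I,V_i^1)$ and $\partial_t v\in L^2(I,(V_i^1)')$, so $v$ lies in the admissible weak-solution class.

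With these verifications the conclusion is immediate: $v\equiv 0$ is also a weak solution of \eqref{eq.NS.lin.weak} for the data $(0,0)$, so the uniqueness part of Theorem~\ref{t.exist.NS.lin.weak} forces $v=0$. Substituting back into the first line of \eqref{eq.NS.lin} leaves $A^{i-1}p=0$, while $(A^{i-2})^* p=0$ and the orthogonality \eqref{eq.p.ort.0} hold because $p\in B^{k+1,2(s-1),s-1}_{i-1,\mathrm{pre}}(X_T)$; the uniqueness part of Corollary~\ref{p.nabla.Bochner}, applied with $F=0$, then yields $p=0$, so the kernel is trivial. I expect the most delicate point to be the bookkeeping that legitimizes treating the strong solution as an admissible weak solution—checking that $v$ carries exactly the regularity demanded by Theorem~\ref{t.exist.NS.lin.weak} and that the pressure field genuinely annihilates against every test section in $V_i^{k'}$—rather than any new analytic estimate, since the substantive work already resides in Theorems~\ref{l.NS.cont.0} and~\ref{t.exist.NS.lin.weak}.
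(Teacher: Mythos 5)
Your proposal is correct and follows essentially the same route as the paper: continuity from Theorem~\ref{l.NS.cont.0} together with the definitions of the norms, and injectivity by verifying that a strong solution is an admissible weak solution (using $(A^{i-1})^*u=0$ to kill the pressure term and reduce the Laplacian, and \eqref{embb} to check the hypotheses on $w$), then invoking the uniqueness in Theorem~\ref{t.exist.NS.lin.weak} for $v$ and Corollary~\ref{p.nabla.Bochner} for $p$. If anything, your write-up is slightly more complete than the paper's, which leaves the recovery of $p=0$ implicit.
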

		
		\begin{proof} 
			Indeed, the continuity of $\mathcal{A}_{w,i}$ follows from 
			Theorem \ref{l.NS.cont.0}.
			Let
			$$
			\begin{array}{rcl}
				(u,p)
				& \in
				&  B^{k,2s,s}_{i,\mathrm{vel}} (X_T) \times 
				B_{i-1,\mathrm{pre}}^{k+1,2(s-1),s-1} (X_T),
				\\
				\mathcal{A}_{w,i} (u,p) = (f,u_0)
				& \in
				& B_{i,\mathrm{for}}^{k,2(s-1),s-1} (X_T) \times V_i^{2s+k}.
			\end{array}
			$$
			As $ (A^{i-1})^* u = 0 $ in $X_T$ we have% integration by parts %with the use of  \eqref{eq.deRham} 
			%yields
			\begin{equation}
				\label{eq.by.parts.1}
				(\Delta^i u,u)_{{L}_i^2(X)}
				= \| A^i u \|^2_{{L}_{i+1}^2(X)}
			\end{equation}
			and
			\begin{equation}
				\label{eq.Duu.zero.1}
				(A^{i-1} p,u)_{{L}_i^2(X)}
				= (p, (A^{i-1})^* u)_{L_{i-1}^2(X)}
				= 0.
			\end{equation}
			As $2s+k+1>n/2$, by the Sobolev embedding theorems, see \eqref{eq.Sob.index}, 
			the space $B^{k,2s,s}_{i,\mathrm{vel}} (X_T)$
			is continuously embedded into %$L^2 (I,H_{2s+k+1})$, $2s+k+1>n/2$}. 
			$L^2 (I,{L}_i^\infty(X))$. 
			Then formulas
			\eqref{eq.dt},
			%\eqref{eq.Bu.zero},
			\eqref{eq.by.parts.1} and
			\eqref{eq.Duu.zero.1}
			readily imply that $u$ is a weak solution to \eqref{eq.NS.lin} granted 
			by Theorem \ref{t.exist.NS.lin.weak}, i.e., \eqref{eq.NS.lin.weak}
			is fulfilled.
			
		\end{proof}
		
		Let us continue with the proof of the surjectivity. 
		
		\begin{lemma}
			\label{l.Galerkin} 
			Let $s\in \mathbb N$, $k \in {\mathbb Z}_+$, $2s+k>n/2$ and  $w\in 
			B^{k,2s,s}_{i,\mathrm{vel}} (X_T)$.  
			Then for each $(f, u_0)\in  B^{k,2(s-1),s-1}_{i,\mathrm{for}} (X_T) 
			\times V^{2s+k}_i $ there is a solution $u\in B^{k,2s,s}_{i,\mathrm{vel}}
			(X_T)$ to \eqref{eq.NS.lin.weak}.
		\end{lemma}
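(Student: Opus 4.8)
The plan is to construct the solution by the Faedo--Galerkin method, obtaining it as a limit of finite-dimensional approximations whose energy estimates reproduce every norm entering the definition of $B^{k,2s,s}_{i,\mathrm{vel}} (X_T)$.

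First I would fix a convenient Galerkin basis. Since the Laplacian $\Delta^i$ commutes with the Helmholtz projection $\mathrm{P}^i$ of Lemma \ref{proector}, it restricts to a self-adjoint elliptic operator on the divergence-free sections $V^0_i$; by Theorem \ref{parametrix.laplas} and elliptic regularity it admits a complete orthonormal system $\{e_j\}_{j=1}^\infty$ of smooth eigensections lying in $V^s_i$ for every $s$, with $\Delta^i e_j = \lambda_j e_j$. Writing $P_m$ for the orthogonal $L^2_i$-projection onto $V_m = \mathrm{span}\{e_1,\dots,e_m\}$, I would seek $u_m(t) = \sum_{j=1}^m g^m_j(t)\,e_j$ solving the finite system obtained by testing \eqref{eq.NS.lin.weak} against each $e_j$, with $u_m(0) = P_m u_0$. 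As $w$ enters only linearly and its coefficients are integrable in $t$, this is a linear Carath\'eodory system for $(g^m_1,\dots,g^m_m)$, hence uniquely solvable on the whole of $[0,T]$.

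Next I would derive a priori bounds uniform in $m$. Testing with $u_m$ itself and using $(A^{i-1})^* u_m = 0$, the term $A^{i-1}(\cdots)$ in $\B(w,u_m)$ pairs to zero, leaving only the zero-order contributions, which by \eqref{eq9}, the Gagliardo--Nirenberg inequality \eqref{eq.L-G-N} and the embedding \eqref{embb} for $w$ can be split, via Young's inequality, into a fraction of the dissipation $\mu\|A^i u_m\|^2_{L^2_{i+1}(X)}$ plus a term $C(t)\|u_m\|^2_{L^2_i(X)}$ with $C \in L^1(I)$; Gronwall's Lemma \ref{l.Groenwall} then yields the bound in $C(I,V^0_i)\cap L^2(I,V^1_i)$. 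For the higher-order norms I would test successively with the powers $(\Delta^i)^l u_m$ and with the time-differentiated equations, controlling every occurrence of the bilinear term through the product estimate \eqref{eq.B.pos.bound} of Theorem \ref{l.NS.cont.0} and again closing the resulting differential inequalities by Gronwall. This hierarchy of estimates, which must bound $\partial_t^j u_m$ together with the spatial derivatives under the constraint $m+2j\le 2s$ so as to reproduce exactly the norm of $B^{k,2s,s}_{i,\mathrm{vel}} (X_T)$, is the main technical obstacle.

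Finally I would pass to the limit. The uniform bounds furnish, along a subsequence, weak limits in the relevant $L^2(I,\cdot)$ spaces and weak-$*$ limits in the $L^\infty(I,\cdot)$ spaces; an Aubin--Lions type compactness argument, using the bound on $\partial_t u_m$, gives strong $L^2$ convergence sufficient to pass to the limit in the bilinear term $\B(w,u_m)$, so that the limit $u$ satisfies \eqref{eq.NS.lin.weak} for every $v \in V^k_i$ with $k \ge n/2$. The initial condition $u(\cdot,0) = u_0$ is recovered from the $C(I,V^0_i)$ regularity granted by Lemma \ref{l.Lions}, and the full collection of limit bounds identifies $u$ as an element of $B^{k,2s,s}_{i,\mathrm{vel}} (X_T)$, which completes the proof of existence.
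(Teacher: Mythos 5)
Your proposal is correct in outline and follows the same Faedo--Galerkin strategy as the paper: finite-dimensional approximations, a hierarchy of energy estimates closed by Gagliardo--Nirenberg, Young and Gronwall, and passage to weak limits identified via Lemma \ref{l.Lions}. Three points of divergence are worth recording. First, you take an eigenbasis of $\Delta^i$ on $V^0_i$, which has the real advantage that the higher-order test sections $(\Delta^i)^l u_m$ genuinely lie in the Galerkin subspace; the paper uses a generic $L^2_i$-orthogonal basis of $V^0_i$ and tests with $\nabla_i^{2r}u_m$ without comment, so your choice is the more careful one. Second, the Aubin--Lions compactness step is superfluous here: for fixed $w$ the map $u\mapsto \mathbf{B}_i(w,u)$ is \emph{linear} and continuous by Theorem \ref{l.NS.cont.0}, so weak convergence of $u_{m'}$ already yields weak convergence of $\mathbf{B}_i(w,u_{m'})$; the paper passes to the limit with weak and weak-$*$ convergence alone and identifies the limit with the unique weak solution granted by Theorem \ref{t.exist.NS.lin.weak}. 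Third, for the norms of $\partial_t^j\nabla^m_i u$ with $m+2j\le 2s$ you propose to differentiate the Galerkin system in time and estimate at the approximate level; the paper instead proves only the spatial hierarchy for the approximations (Lemma \ref{l.OM.bound}, an induction on the order of $\nabla_i$) together with one bound on $\partial_t u_m$, and then recovers all higher time regularity of the limit from the equation $\partial_t u=-\mu\Delta^i u+\mathrm{P}^i(\mathbf{B}_i(w,u)-f)$ combined with an induction on $s$ and the embedding of the data spaces. Your route is classical and workable, but it additionally requires controlling $\partial_t^j u_m(0)$ from the equation at $t=0$, which must be checked against the regularity of $(f,u_0)$; the paper's route avoids this. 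Finally, note that closing the Gronwall inequalities needs pointwise-in-time product estimates of the type \eqref{eq.B.cont.6} rather than the global space-time bound \eqref{eq.B.pos.bound} you cite, though these are available from the proof of Theorem \ref{l.NS.cont.0}.
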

		
		\begin{proof}
			Let
			$(f,u_0)$ be an arbitrary data in $ B
			^{k,2(s-1),s-1}_{i,\mathrm{for}} (X) \times 
			V^{2s+k}_i$
			and let 
			$\{u_m\}$ be the sequence of the corresponding Faedo-Galerkin approximations, 
			namely, 
			\begin{equation*}
				u_m = \sum_{j=1}^M g^{(m)}_{j} (t) b_j (x),
			\end{equation*}
			where the system $\{ b_j \}_{j \in {\mathbb N}}$ 
			is a ${L}_i^2 (X)$-orthogonal basis in $V^0_i$ and the functions $g^{(m)}_{j} $ satisfy the following relations 
			\begin{equation} \label{eq.ODU.lin.0}
				(\partial_\tau u_m , b_j)_{{L}^2_i (X)} + \mu 
				(A^i u_m , A^i b_j )_{{L}^2_{i+1} (X)}  + 
				(\B(w, u_m) , b_j )_{{L}^2_i (X)} =
				\langle f, b_j\rangle ,
			\end{equation}
			\begin{equation*}
				u_m(x,0) = u_{0,m} (x)
			\end{equation*}
			for all $0\leq j \leq m$ with the initial datum $u_{0,m}$ from 
			the linear span ${\mathcal L} (\{b_j\}_{j=1}^m)$ such that the sequence 
			$\{u_{0,m}\}$ converges to $u_0$ in $H_0$.  
			For instance, as $\{u_{0,m}\}$ we may take  the orthogonal projection 
			onto the linear span ${\mathcal L} (\{b_j\}_{j=1}^m)$.

			The scalar functions 
			$F_{j} (t)= \langle f (\cdot,t),  b_{j}\rangle$ 
			belong to $C^{s-1} [0,T] \cap H^s [0,T]$, 
			and the components
			\begin{equation} \label{eq.C}
				\mathfrak{C}^{(m)}_{k,j} (t)= \mu 
				(A^i b_k , A^i b_j )_{{L}^2_{i+1} (X)}+
				(\B(w(\cdot,t), b_k) , b_j )_{{L}^2_i (X)} =
			\end{equation}
			\begin{equation*}
				\mu 
				(A^i b_k , A^i b_j )_{L^2_{i+1} (X)} +
				(M_{i,1} ( A^{i}w, b_k), b_j)_{{L^2_i}(X)}  + (M_{i,1} ( A^{i}b_k, w), b_j)_{{L^2_i}(X)} 
			\end{equation*}
			belong to 
			$C^{s} [0,T] \cap H^{s+1} [0,T]$. %, \eqref{eq.CC}. 
			Since $w \in B^{k,2s,s}_{\mathrm{i, vel}} (X_T)$, 
			formula \eqref{eq.C} means that  
			the entries of the matrix $\exp{\Big(\int_0^t {\mathfrak C}^{(m)}(\tau) d\tau \Big)}$
			belong actually to $C^{s+1} [0,T]\cap H^{s+2} [0,T]$ 
			and then the components 
			of the vector $g^{(m)}$ belong to $C^{s} [0,T] \cap H^{s+1} [0,T]$. 
			
			Again we assume that $n\geq 3$ (for $n=2$ arguments are similar).
			Let $s=1$ and $k\in {\mathbb Z}_+$ satisfying $k>n/2-2$.
			If we multiply the equation corresponding to index $j$ in 
			\eqref{eq.ODU.lin.0} by $ \frac{d g^{(m)}_{j}}{d\tau}$ then, after the summation with 
			respect to $j$, we obtain for all $\tau \in [0,T]$:
			\begin{equation} \label{eq.um.bound.add}
				\|\partial_ \tau u_m\|^2_{{L}^2 (X)} + 
				\frac{\mu }{2} \frac{d}{\partial \tau }\|A^i u_m \|^2_{{ L}^2 (X)} 
				= 
			\end{equation} 
			\[
			(f , \partial_\tau u_m )_{{L}^2_i (X) } - (M_{i,1} ( A^{i}u_m, w), \partial_\tau u_m)_{{L^2_i}(X)} -
			(M_{i,1} ( A^{i}w, u_m), \partial_\tau u_m)_{{L^2_i}(X)}.
			\]

			It follows from (\ref{eq9}) and from the H\"older inequality with $q_1=\infty$, $q_2=2$, $q_3=2$ and $p_1=\frac{2n}{n-2}$, $p_2=n$, $p_3=2$ that
			\begin{equation}\label{ineq1}
				\left| (M_{i,1} ( A^{i}u_m, w), \partial_\tau u_m)_{{L^2_i}(X)} + (M_{i,1} ( A^{i}w, u_m), \partial_\tau u_m)_{{L^2_i}(X)} \right|  \leq
			\end{equation}
			\[
			c\left( \|A^i u_m \|_{{ L}_{i+1}^2 (X)} \|w \|_{{ L}_i^\infty (X)}+	\| u_m \|_{{ L}_i^{\frac{2n}{n-2}} (X)}\|A^i w \|_{{ L}_{i+1}^n (X)} \right) \|\partial_ \tau u_m\|_{{ L}_i^2 (X)}\leq
			\]
			\[
			c_1\left(  \|A^i u_m \|^2_{{ L}_{i+1}^2 (X)} \|w \|^2_{{ L}_i^\infty (X)}+	\| u_m \|^2_{{ L}_i^{\frac{2n}{n-2}} (X)}\|A^i w \|^2_{{ L}_{i+1}^n (X)} \right)  + \frac{1}{2}\|\partial_ \tau u_m\|^2_{{ L}_i^2 (X)}
			\]
			with positive constants $ c $ and $ c_1 $. The last estimation is due to the Young's inequality \eqref{eq.Young}. By the inequality (\ref{eq.L-G-N.n}) with $ m'=k+1 $ and Lemma (\ref{equivalent}) there are constants $ c,c_1,c_2>0 $ such that 
			\begin{equation}\label{eneq11}
				\|A^iw\|^2_{ L_{i+1}^n (X)} \leq c_1 \left( \|\nabla_i w\|^2_{ L_i^n (X)} + \| w\|^2_{ L_i^2 (X)}\right)\leq
			\end{equation}
			\[
			c_2 \left(  \|\nabla_i^{k+2} w\|^{\frac{n-2}{k+1}}
			_{{L}^2_i (X)}
			\| w\|^{\frac{2(k+1)-n+2}{k+1}}_{{L}^2_i (X)} + \| \nabla_i w \|^2_{L^{2}_i (X)} + \| w\|^2_{ L_i^2 (X)}\right)\leq c\| w\|^2_{ H_i^{k+2} (X)}.
			\]
			On the other hand \eqref{eq.L-G-N.1} gives
			
			\begin{equation}\label{eneq12}
				\| u_m \|^2_{{ L}_i^{\frac{2n}{n-2}} (X)} \leq c \left( \|\nabla_i u_m\|_{ L_i^2 (X)} + \| u_m\|_{ L_i^2 (X)}\right)^2\leq
			\end{equation}
			
			\[
			c_1 \left( \|A^i u_m\|^2_{ L_{i+1}^2 (X)} + \| u_m\|^2_{ L_i^2 (X)}\right),
			\]
			with positive constants $ c $, $ c_1 $. The last estimate is consequence of G\aa{}rding's inequality
			\[
			\| u_m\|_{ H_i^1 (X)}\leq c\left( \| A^i u_m\|_{  L_{i+1}^2 (X)} + \| (A^{i-1})^* u_m\|_{  L_{i-1}^2 (X)} +\| u_m\|_{  L_i^2 (X)}\right)
			\]
			with a constant $ c>0 $,
			however $  (A^{i-1})^* u_m = 0$.
			
			After the integration of (\ref{eq.um.bound.add}) with respect to $\tau \in I_t = [0,t]$ we arrive at the following:
			\begin{equation} \label{eq.um.bound.1}
				2\|\partial_ \tau u_m\|^2_{L^2(I_t,{ L}_i^2 (X))} + 
				\mu \|A^i u_m \|^2_{{ L}_{i+1}^2 (X)} 
				\leq  \mu\|A^i {u_0}_m\|^2_{{ L}_i^2 (X)} + 
			\end{equation}
			\[
			2\|f\|_{L^2(I_t,{ L}_i^2 (X))}\|\partial_ \tau u_m\|_{L^2(I_t,{ L}_i^2 (X))}+ \frac{1}{2}\|\partial_ \tau u_m\|^2_{L^2(I_t,{ L}_i^2 (X))} +
			\]
			\[
			c\int_0^t \left(  \|  w \|^2_{{ L}_i^\infty (X)} \| A^i u_m \|^2
			_{{ L}_i^2 (X)} + \| u_m \|^2_{{ L}_i^{\frac{2n}{n-2}} (X)}
			\|A^i w \|^2_{{ L}_i^n (X)} 
			\right)  \, d\tau \leq 
			\]
			\[
			\mu\|A^i {u_0}_m\|^2_{L^2(I_t,{ L}_i^2 (X))} + 
			4 \|f\|^2_{L^2 (I,{L}^2_i (X)) } + \|\partial_ \tau u_m\|^2_{L^2(I_t,{ L}_i^2 (X))}+
			\]
			\[
			c_1 \int_0^t \|  w \|^2_{{ H}^{k+2}_i (X)} \left( 
			\| A^i u_m \|^2_{{ L}^2_{i+1} (X)} + \| u_m \|^2_{{ L}^2_i (X)}\right)   d\tau  ,
			\]
			with positive constants $c$, $c_1$ independent on $w$ and $m$, 
			the last bound being a consequence of the Sobolev Embedding Theorems and 
			inequalities \eqref{eneq11}, \eqref{eneq12} and \eqref{ineq1}.
			
			\begin{lemma}
				\label{l.OM.bound}
				Suppose $k \in {\mathbb Z}_+$ satisfying $k>n/2-2$.
				If $(f,u_0) \in B^{k,0,0}_{i,\mathrm{for}} (X_T) \times V^{k+2}_i$ then
				\begin{equation}\label{povish}
					\| \nabla^{k'}_i u_m \|^2_{C (I,{L}^2_i   )}
					+ \mu\, \| \nabla^{k'+1}_i u_m \|^2_{L^2 (I,{L}^2_i   )}
					\leq
					c_{k'} (\mu,w,f,u_0)
				\end{equation}
				for any $0 \leq k' \leq k+2$, the constants
				$c_{k'} (\mu,w,f,u_0) > 0$
				depending on
				$k'$ and
				$\mu$
				and the norms
				$\| w \|_{B^{k,2,1}_\mathrm{i, vel} (X_T)}$,
				$\| f \|_{B^{k,0,0}_{i,\mathrm{for}} (X_T)}$,
				$\| u_0 \|_{V^{k+2}_i}$
				but not on $m$.
			\end{lemma}
			
			\begin{proof}
				We argue by induction. Let $k=0$ and  $k'=0$, %if $k'=0$ then \eqref{povish} follows from the \eqref{eq.En.Est2}.
				substituting $ u_m $ into \eqref{eq.NS.lin.weak} instead of $ v $ and $ u $ we have
				\begin{eqnarray}
					\label{eq67}
					\frac{d}{dt} \| u_m \|^2_{{L}_i^2(X)   }
					+ 2 \mu \| \nabla_i u_m \|^2_{{L}^2_i(X)    } = 
					2 \left( (\B (w,u_m) - f), u_m  \right) _{{L}^2_i(X) }
				\end{eqnarray}
				for all $t \in [0,T]$.
				Using by H\"older inequality, we get
				\begin{equation}
					\label{eq68}
					2\left|  \left( f, u_m  \right) _{{L}^2_i(X) }\right| 
					\leq
					\frac{2}{\mu} \| f \|^2_{{L}^2_i(X)  }
					+ \frac{\mu}{2} \| u_m \|^2 _{{L}^2_i(X)   } ,
				\end{equation}
				and similarly
				\begin{equation}
					\label{eq69}
					2 \left|  \left( \B (w,u_m), u_m  \right) _{{L}^2_i(X)    } \right| 
					\leq 
					\frac{c}{\mu} 
					\| \B (w, u_m) \|^2_{{L}^2_i(X)  }
					+   \frac{\mu}{c} \| u_m \|^2_{{L}^2_i(X)  }
				\end{equation}
				with an arbitrary positive constant $c$.
				
				Next, using the Sobolev embedding theorem we have in each local card $ U_l $
				\begin{equation}\label{key233}
					\| w_{U_l} \|_{{L}^\infty (\mathbb{R}^n)} \leq c\|w_{U_l} \|_{{H}^{k+2} (\mathbb{R}^n)},
				\end{equation}
				where $ w_{U_l} $ is  representation of $ w $ in $ U_l $ and $ c $ is a positive constant. Then, under the Lemma \ref{equivalent} we get
				\begin{eqnarray}
					\label{eq.B.cont.1M}
					\int_0^t
					\| \B (w,u_m) \|^2_{{L}^2_i(X)   } ds \leq
					c\left( \int_0^t
					\!
					\| w \|^2_{{H}^2_i(X)   } \| \nabla_i u_m \|^2_{{L}^2 _i(X)  }
					ds
					+ \right. 
				\end{eqnarray}
				\[
				\left. \| w \|^2_{L^2 (I,{H}^3 _i(X)  )} \| u_m \|^2_{C (I,{L}^2_i(X)   )}\right).
				\]
				
				From Theorem \ref{t.exist.NS.lin.weak} and the Sobolev embedding theorem it
				follows that
				$$
				\| w \|^2_{L^2 (I,{H}^3_i(X)   )} \| u_m \|^2_{C (I,{L}^2_i(X)   )}
				\leq
				c (w) \left( \| u_0 \|^2_{L^2 (X)   }  + \| f \|^2_{L^2 (I,({V}^1_i)'   )}\right) 
				$$
				where $c (w)$ is a positive constant depending on
				$\| w \|^2_{L^2 (I,{H}^3_i(X)   )}$.
				
				Next, let  $u_{0,m}$ is an orthogonal 
				projection on the linear span  ${\mathcal L} (\{ b_j \}_{j \in {\mathbb N}} )$	in $V^{k'}_i$, then
				\[
				\lim_{m\to+\infty }\|u_0 - u_{0,m}\|_{{H}^{k'}_i (X)}= 0,\quad 
				\|u_{0,m}\|_{{H}^{k'}_i (X)} \leq \|u_{0}\|_{{H}^{k'}_i (X)}.
				\]
				for each $k' \in 
				{\mathbb Z}_+$.
				
				Then, after integration of \eqref{eq67} over the interval $[0,t]$ and taking into account \eqref{eq68},
				\eqref{eq69} and \eqref{eq.B.cont.1M} we have
				\begin{equation}
					%\label{eq.um.bound.OM.1a}
					\| u_m \|^2_{{L}^2_i (X)   }
					+ 
					\mu
					\int_0^t \| \nabla_i u_m \|^2_{{L}^2 _i (X)  } ds
					\leq 
					c\left( \| u_{0,m} \|^2_{{L}^2_i (X)   }\right. +
				\end{equation}
				\[
				\frac{2}{\mu} \| f \|^2_{L^2 (I,{L}^2_i (X)   )}
				\left. + c_{0,0}
				+ \frac{4}{\mu}
				\int_0^t
				\| w \|^2_{{ H}^2 _i (X)  } \| \nabla_i u_m \|^2_{{L}^2_i (X)   }
				ds\right) 
				\]
				for all $t \in [0,T]$.
				Here the constant $c_{0,0}$ depends on
				$\| w \|_{B^{0,2,1}_{i,\mathrm{vel}} (X_T)}$, 
				$\| u_0 \|^2_{L^2_i (X)   }$ and $ \| f \|^2_{L^2 (I,({V}^1_i)'   )} $
				only. It follows from the Gronwall's Lemma \ref{l.Groenwall}
				that
				\begin{equation}
					%\label{eq.um.bound.OM.strong.1}
					\|  u_m \|^2_{C (I,{L}^2_i (X)   )}
					+ \mu\, \| \nabla_i u_m \|^2_{L^2 (I,{L}^2_i (X)   )}
					\leq
					c_{1} (\mu,w,f,u_0)
				\end{equation}
				with a constant $c_{1} (\mu,w,f,u_0)$ depending on
				$\mu$
				and
				$\| w \|_{B^{0,2,1}_{i,\mathrm{vel}} (X_T)}$,
				$\| f \|_{B^{0,0,0}_\mathrm{i, for} (X_T)}$ and
				$\| u_0 \|_{V^1_i}$,
				only. 
				
				Now, substituting $ \nabla_i^{2r} u_m $ into \eqref{eq.NS.lin.weak} instead of $ v $ with $r \in {\mathbb Z}_+$ and $ u_m $ instead of $ u $ we have
				\begin{eqnarray}
					\label{eq.um.bound.OM.r}
					\lefteqn{
						\frac{d}{dt} \| \nabla_i^{r} u_m \|^2_{{L}_i^2(X)   }
						+ 2 \mu \| \nabla_i^{r+1} u_m \|^2_{{L}^2_i(X)    } = 
					}
					\nonumber
					\\
					& = &
					2 \left(  \nabla_i^{r-1} (\B (w,u_m) - f),
					\nabla_i^{r+1} u_m  \right) _{{L}^2_i(X)    }
				\end{eqnarray}
				for all $t \in [0,T]$.
				Furthermore, using H\"older inequality, we get
				\begin{equation}
					\label{eq.f.differ.est}
					2\left|  \left(  \nabla_i^{r-1} f,
					\nabla_i^{r+1} u_m  \right) _{{L}^2_i(X)    }\right| 
					\leq
					\frac{2}{\mu} \| \nabla_i^{r-1} f \|^2_{{L}^2_i(X)  }
					+ \frac{\mu}{2} \| \nabla_i^{r+1}   u_m \|^2 _{{L}^2_i(X)   } ,
				\end{equation}
				and similarly
				\begin{eqnarray}
					\label{eq.D.differ.est}
					2 \left|  \left(  \nabla_i^{r-1} \B (w,u_m),
					\nabla_i^{r+1} u_m  \right) _{{L}^2_i(X)    } \right| 
					\leq 
				\end{eqnarray}
				\[
				\frac{c}{\mu} 
				\| \nabla_i^{r-1} \B (w, u_m) \|^2_{{L}^2_i(X)  }
				+   \frac{\mu}{c} \| \nabla_i^{r+1} u_m \|^2_{{L}^2_i(X)  }
				\]
				with an arbitrary positive constant $c$.	
				
				Now, combining
				\eqref{eq.um.bound.OM.r} for $r=1$
				and
				\eqref{eq.f.differ.est},
				\eqref{eq.D.differ.est},
				\eqref{eq.B.cont.1M}
				with integration  over the interval $[0,t]$, we arrive at the estimate
				\begin{equation}
					\label{eq.um.bound.OM.1a}
					\| \nabla_i u_m (\cdot, t) \|^2_{{L}^2_i (X)   }
					+ 
					\mu
					\int_0^t \| \nabla^2_i u_m (\cdot, s) \|^2_{{L}^2 _i (X)  } ds
					\leq 
					c\left( \| \nabla_i u_{0,m} \|^2_{{L}^2_i (X)   }\right. +
				\end{equation}
				\[
				\frac{2}{\mu} \| f \|^2_{L^2 (I,{L}^2_i (X)   )}
				\left. + c_{0,0}
				+ \frac{4}{\mu}
				\int_0^t
				\| w \|^2_{{ H}^2 _i (X)  } \| \nabla_i u_m \|^2_{{L}^2_i (X)   }
				ds\right) 
				\]
				for all $t \in [0,T]$.
				Here the constant $c_{0,0}$ depends on
				$\| w \|_{B^{0,2,1}_{i,\mathrm{vel}} (X_T)}$, 
				$\| u_0 \|^2_{L^2_i (X)   }$ and $ \| f \|^2_{L^2 (I,({V}^1_i)'   )} $
				only.
				
				At this point inequality \eqref{eq.um.bound.OM.1a} and Gronwall's Lemma \ref{l.Groenwall}
				yield
				\begin{equation}
					\label{eq.um.bound.OM.strong.1}
					\| \nabla_i u_m \|^2_{C (I,{L}^2_i (X)   )}
					+ \mu\, \| \nabla^2_i u_m \|^2_{L^2 (I,{L}^2_i (X)   )}
					\leq
					c_{1} (\mu,w,f,u_0)
				\end{equation}
				with a constant $c_{1} (\mu,w,f,u_0)$ depending on
				$\mu$
				and
				$\| w \|_{B^{0,2,1}_{i,\mathrm{vel}} (X_T)}$,
				$\| f \|_{B^{0,0,0}_\mathrm{i, for} (X_T)}$ and
				$\| u_0 \|_{V^1_i}$,
				only. 
				
				Now, the Sobolev embedding theorem and H\"older inequality yield 
				\begin{eqnarray}
					\label{eq.B.cont.1M1}
					\int_0^t
					\| \nabla_i \B (w,u_m) \|^2_{{L}^2_i(X)   } ds \leq
					c\left(
					\int_0^t
					\| w \|^2_{{ H}^2 _i (X)  } \| \nabla^2_i u_m \|^2_{{L}^2 _i (X)  }
					ds
					+ \right. 
				\end{eqnarray}
				\[	+ 
				\left.  \| w \|^2_{L^2 (I,{H}^3 _i (X)  )} \| u_m \|^2_{C (I,{H}^1  _i (X) )}
				+ \| \nabla^2_i w \|^2_{C (I,{L}^2 _i (X)  )} \| u_m \|^2_{L^2 (I,{H}^2_i (X)   )}
				\right)
				\]
				for some positive constant $ c $.
				On combining
				\eqref{eq.um.bound.OM.r} for $r = 2$
				and
				\eqref{eq.B.cont.1M1},
				\eqref{eq.f.differ.est},
				\eqref{eq.D.differ.est}
				with integration  over the interval $[0,t]$ we obtain
				\begin{equation}
					\label{eq.um.bound.OM.1b}
					\| \nabla^2_i u_m (\cdot, t) \|^2_{{L}^2_i (X)   }
					+ \mu \int_0^t \| \nabla^3_i u_m (\cdot, s) \|^2_{{L}^2_i (X)   } ds
					\leq 
					\| \nabla^2_i u_{0,m} \|^2_{{L}^2_i (X)   }
					+ 
				\end{equation}
				\[ \frac{2}{\mu} \| \nabla_i f \|^2_{L^2 (I,{L}^2 _i (X)  )}
				+ c_{0,0}
				+ c_{1,0}
				\frac{2}{\mu}
				\int_0^t
				\| w \|^2_{{ H}^2 _i (X)  } \| \nabla^2_i u_m \|^2_{{L}^2 _i (X)  }
				ds
				\]
				\[
				+ 
				c_{1,0}
				\frac{2}{\mu}
				\Big( \| w \|^2_{L^2 (I,{H}^3 _i (X)  )} \| u_m \|^2_{C (I,{H}^1  _i (X) )}
				+ \| \nabla^2_i w \|^2_{C (I,{L}^2 _i (X)  )} \| u_m \|^2_{L^2 (I,{H}^2_i (X)   )}
				\Big).
				\]
				From inequalities
				\eqref{eq.um.bound.OM.strong.1},
				\eqref{eq.um.bound.OM.1b}
				and Gronwall's Lemma %\ref{l.Perov}
				\ref{l.Groenwall} 
				it follows readily that
				$$
				\| \nabla^2_i u_m \|^2_{C (I,{L}^2   )}
				+ \mu\, \| \nabla^3_i u_m \|^2_{L^2 (I,{L}^2 _i (X)  )}
				\leq
				c_{2} (\mu,w,f,u_0),
				$$
				where $c_{2} (\mu,w,f,u_0)$ is a constant depending on
				$\mu$
				and
				$\| w \|_{B^{0,2,1}_{i,\mathrm{vel}} (X_T)}$,
				$\| f \|_{B^{0,0,0}_{i,\mathrm{vel}} (X_T)}$ and
				$\| u_0 \|_{V^2_i}$,
				only.
				
				Assume that the sequence $\{ u_m \}$ is bounded in the space
				$B^{k,2,1}_{i,\mathrm{vel}} (X_T)$,
				for given data $(f,u_0) \in B^{k,0,0}_{i,\mathrm{for}} (X_T) \times V_i^{k+2}$,
				with $k = k' \in \mathbb N$,
				i.e.,
				\begin{equation}
					\label{eq.um.bound.OM.strong.k.prime}
					\| \nabla^{k''}_i u_m \|^2_{C (I,{L}^2 _i (X)   )}
					+ \mu\, \| \nabla^{k''+1}_i u_m \|^2_{L^2 (I,{L}^2 _i (X)   )}
					\leq
					c_{k''} (\mu,w,f,u_0),
				\end{equation}
				if $0 \leq k'' \leq k'+2$, where the constants $c_{k''} (\mu,w,f,u_0)$ depend on
				$\mu$
				and the norms
				$\| w \|_{B^{k,2,1}_{i,\mathrm{vel}} (X_T)}$,
				$\| f \|_{B^{k,0,0}_{i,\mathrm{for}} (X_T)}$,
				$\| u_0 \|_{V^{k+2}_i}$
				but not on $m$.
				Then, combining
				\eqref{eq.f.differ.est},
				\eqref{eq.D.differ.est}
				with integration over the time interval $[0,t]$, we get
				\begin{eqnarray}
					\label{eq.um.bound.OM.1k}
					\lefteqn{
						\| \nabla^{k'+3}_i u_m  (\cdot,t) \|^2_{{L}^2 _i (X)   }
						+ \mu \int_0^t \| \nabla^{k'+4}_i u_m  (\cdot,s) \|^2_{{L}^2 _i (X)   } ds
					}
					\nonumber
					\\
					& \leq 
					\| \nabla^{k'+3}_i u_{0,m} \|^2_{{L}^2 _i (X)   }
					+ \| \nabla^{k'+2}_i f \|^2_{L^2 (I,{L}^2 _i (X) )}
					+ \frac{2}{\mu} \| \nabla^{k'+2}_i \B (w,u_m) \|^2_{L^2 (I,{L}^2_i (X)  )}.
					\nonumber
					\\
				\end{eqnarray}
				In this way we need to evaluate the last summand on the right-hand side of 
				\eqref{eq.um.bound.OM.1k}.
				For all $ 0\leq k'\leq k $, similarly to \eqref{eq25}, we have
				\begin{equation}\label{ocenB}
					\| \nabla^{k'+2}_i \B (w,u_m) \|^2_{L^2 (I,{L}^2 _i(X)   )} \leq 
				\end{equation}
				\[
				c_{0}\,
				\frac{2}{\mu}
				\int_0^t
				\| \nabla^{k'+3}_i u_m (\cdot,s) \|^2_{{L}^2 _i(X)  }
				\| w (\cdot,s) \|^2_{{H}^2 _i(X)  }
				ds + 
				\]
				\[
				\frac{2}{\mu}
				\sum_{j=1}^{k'+1}
				c_{j}\,
				\| \nabla^{k'+3-j}_i u_m \|^2_{C (I,{L}^2_i(X)   )}
				\| w \|^2_{L^2 (I,{H}^{k'+ 3} _i(X)  )} + 
				\]
				\[
				\frac{2}{\mu}
				\sum_{j=1}^{k'+1}
				c_{j}\,
				\| \nabla^{k'+3-j} _iw \|^2_{C (I,{L}^2 _i(X)  )}
				\| u_m \|^2_{L^2 (I,{H}^{k'+3} _i(X)  )} + 
				\]
				\[
				c_{0}\,
				\frac{2}{\mu}\,
				\| w \|^2_{C (I,{H}^{k'+3} _i(X)  )}
				\| \nabla^{k'+2}_i u_m \|^2_{L^2 (I,{L}^{2}_i(X)   )}+
				\]
				with positive constants $ c_{j} $.
				All terms on the right-hand side of this inequality can be estimated due to the inductive
				assumption of \eqref{eq.um.bound.OM.strong.k.prime} and the Sobolev embedding theorem.
				From
				\eqref{ocenB},
				\eqref{eq.um.bound.OM.strong.k.prime} and
				\eqref{eq.um.bound.OM.1k},
				it follows that
				\begin{eqnarray}
					\label{eq.um.bound.OM.1k.A}
					\| \nabla^{k'+3}_i u_m  (\cdot, t) \|^2_{{L}^2 _i(X)  }
					+ \mu \int_0^t \| \nabla^{k'+4}_i u_m  (\cdot,s) \|^2_{{L}^2 _i(X)  } ds
					\, \leq \,
					\| \nabla^{k'+3}_i u_{0} \|^2_{{L}^2_i(X)   }+ 
				\end{eqnarray}
				\[
				\| \nabla^{k'+2}_i f \|^2_{L^2 (I,{L}^2 _i(X)  )}
				+ c_{k'+2,0}\,
				\frac{2}{\mu}
				\int_0^t
				\| \nabla^{k'+3}_i u_m (\cdot,s) \|^2_{{L}^2 _i(X)  }
				\| w (\cdot,s) \|^2_{{H}^2 _i(X)  }
				ds + 
				\]
				\[
				R_{k'+3} (\mu,w,f,u_0),
				\]
				for all $t \in [0,T]$, the remainder $R_{k'+3} (w,f,u_0)$ depends on
				$\mu$
				and
				$\| w \|_{B^{k'+1,2,1}_{i,\mathrm{vel}} (X_T)}$,
				$\| f \|_{B^{k'+1,0,0}_{i,\mathrm{for}} (X_T)}$,
				$\| u_0 \|_{V^{k'+2}_i}$,
				only.
				
				As before,
				\eqref{eq.um.bound.OM.strong.k.prime},
				\eqref{eq.um.bound.OM.1k.A}
				and Gronwall's Lemma 
				\ref{l.Groenwall} 
				yield
				$$
				\| \nabla^{k'+3}_i u_m \|^2_{C (I,{L}^2 _i(X)   )}
				+ \mu\, \| \nabla^{k'+4}_i u_m \|^2_{L^2 (I,{L}^2 _i(X)   )}
				\leq
				c_{k'+3} (\mu,w,f,u_0),
				$$
				the constant $c_{k'+3} (\mu,w,f,u_0)$ depends on
				$\mu$
				and
				$\| w \|_{B^{k'+1,2,1}_{i,\mathrm{vel}} (X_T)}$,
				$\| f \|_{B^{k'+1,0,0}_{i,\mathrm{for}} (X_T)}$ and
				$\| u_0 \|_{V_i^{k'+3}}$
				but not on the index $m$.
				When combined with the induction hypothesis of \eqref{eq.um.bound.OM.strong.k.prime}, the
				latter estimate implies that the assertion of the lemma is true for all $k \in {\mathbb Z}_+$.
			\end{proof}

			It follows from the Lemma \eqref{l.OM.bound} that the sequence $\{ u_m\}$
			is bounded
			in the space
			$C (I,{H}^{k+2}_i (x)   )\cap L^2 (I,{H}^{k+3} _i (x)  )$
			if
			the data $(f,u_0)$ belong to $B^{k,0,0}_\mathrm{i, for} (X_T) \times V_i^{k+2}$.
			Hence it follows that we may extract a subsequence $\{ u_{m'} \}$, such that
			
			1)
			for any $j$ satisfying $j \leq k+3$, the sequence
			$\{ \nabla_i^j  u_{m'} \}$
			converges weakly in $L^2 (I,{L}^{2}  _i (x) )$.
			
			2)
			the sequence 
			$\{   u_{m'} \} $
			converges $^{\ast}$-weakly in $L^\infty (I,{H}^{k+2} _i (x)  ) \cap L^2 (I,{H}^{k+3} _i (x)  )$ to an element
			$u$.
			
			On the other hand, applying Lemma \ref{l.Groenwall}  to the
			inequality \eqref{eq.um.bound.1} we see that 
			the sequence  $\{\partial_t u_m\}$ is bounded in the space 
			$L^2 (I,{L}^2_i(X))$. In particular, 
			we may extract a subsequence $\{\partial_t u_{m'}\}$ such that 
			$\{ \partial_t  u_{m'}\} $  
			converges weakly in $L^2 (I,V^0_i)$ to an element $\tilde 
			u \in L^2 (I,V^0_i)$. 
			
			By the very construction and Theorem \ref{t.exist.NS.lin.weak}, 
			the section $u$ is the unique solution to \eqref{eq.NS.lin.weak}  
			from the space 	$L^\infty (I,V_i^{k+2}) \cap L^2 (I,V_i^{k+3}) \cap C (I,V_i^0)$. Hence,  Lemma \ref{l.Lions} yields
			$\nabla_i^j u \in C^2 (I,{L}^{2} _i(X)  )$
			if $j \leq k+2$.
			Moreover, by \eqref{eq.B.cont.6}, the section $\B (w, u)$ belongs to
			$C (I,{H}^k_i(X)   ) \cap L^2 (I,{H}^{k+1}_i(X)   )$.
			
			Actually,   \eqref{eq.NS.lin.weak} imply that 
			\begin{equation} \label{eq.um.bound.OM.1t}
				\partial_t u  = -\mu \Delta^i u +\mathrm {P}^i (\B(w, u)-f) \mbox{ in } 
				X_T
			\end{equation}
			in the sense of distributions. According to Lemma \ref{p.nabla.Bochner}, the projection 
			$\mathrm {P}^i$ maps
			$C (I,{H}^k_i (X)   ) \cap L^2 (I,{H}^{k+1}  _i (X) )$
			continuously into
			$C (I,V_i^k) \cap L^2 (I,V_i^{k+1})$. Then the section $\partial_t u$ belongs to
			$C (I,V_i^k) \cap L^2 (I,V_i^{k+1})$.
			
			We have thus proved that \eqref{eq.NS.lin.weak} admits a unique solution
			$u \in  B^{k,2,1}_{i,\mathrm{vel}} (X_T)$
			for any data $(f,u_0) \in B^{k,0,0}_{i,\mathrm{for}} (X_T) \times V_i^{k+2}$.
			
			Now, it follows from Lemma \ref{proector} that 
			\[
			(I - \mathrm {P}^i) (f - \B (w, u)) = A^{i-1}(A^{i-1})^*(f - \B (w, u))
			\]
			and then the Corollary \ref{p.nabla.Bochner} implies
			that there is a unique function $p\in B
			^{k+1,0,0}_{i-1,\mathrm{pre}} (X_T)$ 
			such that 
			\begin{equation} \label{eq.nabla.p}
				A^{i-1} p= (I- \mathrm {P}^i) (f- \B(w, u)) \mbox{ in } X_T.
			\end{equation}
			Adding \eqref{eq.um.bound.OM.1t} to \eqref{eq.nabla.p} we conclude that 
			the pair 
			\[
			(u,p) \in B^{k,2,1}_{i,\mathrm{vel}} (X_T)\times 
			B^{k+1,0,0}_{i-1,\mathrm{pre}} (X_T)
			\]
			is the unique solution to 
			\eqref{eq.NS.lin}  related to the datum $(f,u_0) \in B^{k,0,0}
			_{i,\mathrm{for}} (X_T) \times V_i^{2+k}$. This implies the surjectivity of the 
			mapping ${\mathcal A}_{w,i}$ for $ s=1 $ and for any $k \in {\mathbb Z}_+$.
			
			We finish the proof of the theorem with induction in $s \in \mathbb N$.
			More precisely, assume that the assertion of the theorem concerning the surjectivity of the mapping $\mathcal{A}_{w,i}$ holds for some $s = s' \in \mathbb N$ and any $k \in {\mathbb Z}_+$. Let
			$(f,u_0) \in B^{k,2s',s'}_{i,\mathrm{for}} (X_T) \times V_i^{2 (s'+1)+k}$.
			It is clear that
			\[
			B^{k,2s',s'}_{i,\mathrm{for}} (I) \times V_i^{2 (s'+1)+k}
			\hookrightarrow
			B^{k+2,2(s'-1),s'-1}_{i,\mathrm{for}} (X_T) \times V_i^{2s'+k+2},
			\]
			and we see that according to the induction assumption there is a unique solution $(u,p)$ to \eqref{eq.NS.lin} which belongs to
			$ B^{k+2,2s',s'}_{i,\mathrm{vel}} (X_T) \times B^{k+3,2(s'-1),s'-1}_{i-1,\mathrm{pre}} (X_T)$.
			
			By Theorem \ref{l.NS.cont.0} and Lemma \ref{proector}, the sections
			$\varDelta^i u$,
			$\B(w,u)$ and
			$\mathrm {P}^i (f - \B(w,u))$
			belong to
			$B^{k,2s',s'}_{i,\mathrm{for}} (X_T)$,
			and so the derivative $\partial_t u$ is in this space, too, because of \eqref{eq.um.bound.OM.1t}.
			As a consequence, \eqref{eq.nabla.p} implies
			$A^{i-1} p \in B^{k,2s',s'}_{i,\mathrm{for}} (X_T)$,
			and so
			$p \in B^{k+1,2s',s'}_{i-1,\mathrm{pre}} (X_T)$.
			
			Thus, the pair $(u,p)$ actually belongs to
			$B^{k,2(s'+1),s'+1}_{i,\mathrm{vel}} (X_T) \times B^{k+1,2s',s'}_\mathrm{i-1, pre} (X_T)$,
			i.e., the mapping $\mathcal{A}_{w,i}$ of \eqref{eq.map.Aw} is surjective for all $k \in {\mathbb Z}_+$
			and $s \in {\mathbb N}$.
		\end{proof}
		
		Finally, as the mapping $\mathcal{A}_{w,i}$ is bijective and continuous, the continuity of the 
		inverse $\mathcal{A}^{-1}_{w,i}$ follows from the inverse mapping theorem for Banach spaces.
	\end{proof}
	
	Now we may formulate the main results of this paper.
	
	\begin{theorem}
		\label{t.open.NS.short}
		Let  $n\geq 2$,  $s \in \mathbb N$ and $k \in {\mathbb Z}_+$, $2s+k >n/2$. 
		Then \eqref{eq3} induces an injective continuous nonlinear mapping
		\begin{equation}
			\label{eq.map.A}
			\mathcal{A}_i :
			B^{k,2s,s}_{i,\mathrm{vel}} (X_T) \times B^{k+1,2(s-1),s-1}_{i-1,\mathrm{pre}} 
			(X_T) \to
			B^{k,2(s-1),s-1}_{i,\mathrm{for}} (X_T) \times H_i^{2s+k}
		\end{equation}
		which is moreover open.
	\end{theorem}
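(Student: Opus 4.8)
The plan is to exhibit $\mathcal{A}_i$ as a $C^1$ nonlinear map whose Fr\'echet derivative at each point coincides with the linearised operator $\mathcal{A}_{w,i}$ of Theorem~\ref{t.exist.NS.lin.strong}, and then to deduce openness from the inverse mapping theorem and injectivity from a Gronwall energy argument. Continuity of $\mathcal{A}_i$ is immediate: the operators $\partial_t$, $\mu\Delta^i$ and $p\mapsto A^{i-1}p$ are continuous by Theorem~\ref{l.NS.cont.0} and the definition of the pressure norm, while $N^i(u)=\tfrac12\,\B(u,u)$ (indeed $\B(u,u)=2N^i(u)$) is continuous as a quadratic map by the bilinear estimate of Theorem~\ref{l.NS.cont.0} recorded in \eqref{eq.B.pos.bound}.

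Next I would compute the derivative. Expanding at a base point $(w,q)$ in a direction $(v,r)$ and using that $\B$ is symmetric and bilinear,
\[
N^i(w+v)-N^i(w)=\B(w,v)+\tfrac12\,\B(v,v),
\]
so the increment of $\mathcal{A}_i$ equals $\mathcal{A}_{w,i}(v,r)$ plus the purely quadratic remainder $\big(\tfrac12\B(v,v),0\big)$, which is $o(\|(v,r)\|)$ by the same bilinear estimate (the initial-value slot $u\mapsto u(\cdot,0)$ being linear and contributing $v(\cdot,0)$). Hence $\mathcal{A}_i$ is Fr\'echet differentiable with $\mathcal{A}_i'(w,q)=\mathcal{A}_{w,i}$, and since $w\mapsto\B(w,\cdot)$ is bounded and linear, the derivative depends continuously on the base point, so $\mathcal{A}_i\in C^1$. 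By Theorem~\ref{t.exist.NS.lin.strong} each $\mathcal{A}_{w,i}$ is a toplinear isomorphism; the inverse mapping theorem for Banach spaces then shows that $\mathcal{A}_i$ is a local $C^1$-diffeomorphism, hence an open map.

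For injectivity I would prove uniqueness for the nonlinear problem. If $\mathcal{A}_i(u_1,p_1)=\mathcal{A}_i(u_2,p_2)$, put $v=u_1-u_2$ and $q=p_1-p_2$; then $v(\cdot,0)=0$, $(A^{i-1})^*v=0$, and bilinearity gives $N^i(u_1)-N^i(u_2)=\tfrac12\big(\B(u_1,v)+\B(v,u_2)\big)$, whence
\[
\partial_t v+\mu\Delta^i v+\tfrac12\big(\B(u_1,v)+\B(v,u_2)\big)+A^{i-1}q=0\quad\text{in }X_T.
\]
Pairing with $v$ in $L^2_i(X)$ and invoking \eqref{eq.by.parts.1}, \eqref{eq.Duu.zero.1} annihilates the pressure term and produces the dissipation $\mu\|A^i v\|^2_{L^2_{i+1}(X)}$; moreover the $A^{i-1}M_{i,2}$ parts of $\B$ pair to zero against $v$ because $(A^{i-1})^*v=0$. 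The surviving $M_{i,1}$ terms are controlled by \eqref{eq9}, H\"older's inequality and the Gagliardo--Nirenberg estimates \eqref{eq.L-G-N.1}, \eqref{eq.L-G-N.n} exactly as in the proof of Theorem~\ref{t.exist.NS.lin.weak}, the factors carrying a derivative of $v$ being absorbed into $\tfrac{\mu}{2}\|A^i v\|^2$ by Young's inequality \eqref{eq.Young}. This yields
\[
\frac{d}{dt}\|v\|^2_{L^2_i(X)}\leq C(t)\,\|v\|^2_{L^2_i(X)},\qquad \int_0^T C(t)\,dt<\infty,
\]
the integrability of $C$ coming from $u_1,u_2\in B^{k,2s,s}_{i,\mathrm{vel}}(X_T)\hookrightarrow L^2(I,L^\infty_i(X))\cap L^\infty(I,L^n_i(X))$, cf.\ \eqref{embb}. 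Since $v(\cdot,0)=0$, Gronwall's Lemma~\ref{l.Groenwall} forces $v\equiv0$, so $u_1=u_2$; then $A^{i-1}q=0$, and the uniqueness part of Corollary~\ref{p.nabla.Bochner} gives $q=0$, i.e.\ $p_1=p_2$.

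The only genuinely delicate point is this last energy estimate: one must verify that every nonlinear contribution splits into a piece controlled by the viscous dissipation $\mu\|A^iv\|^2$ and a piece dominated by a time-integrable multiple of $\|v\|^2_{L^2_i(X)}$. This is exactly where the hypothesis $2s+k>n/2$, through the embedding \eqref{embb}, is used; everything else is a direct assembly of Theorems~\ref{l.NS.cont.0} and \ref{t.exist.NS.lin.strong}.
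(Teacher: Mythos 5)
Your proposal is correct and follows essentially the same route as the paper: continuity and the identification $\mathcal{A}_i'(w,p_0)=\mathcal{A}_{w,i}$ via the bilinear bound \eqref{eq.B.pos.bound}, openness from Theorem \ref{t.exist.NS.lin.strong} combined with the inverse/implicit function theorem for Banach spaces, and injectivity by the Gronwall energy argument with Corollary \ref{p.nabla.Bochner} handling the pressure. You merely supply more detail than the paper at two points it leaves terse, namely the quadratic-remainder estimate for the Fr\'echet derivative and the absorption of the nonlinear terms into the dissipation in the uniqueness estimate.
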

	
	\begin{proof}
		Indeed, the continuity of the mapping $\mathcal{A}_i$ is clear from 
		Theorem \ref{l.NS.cont.0}.
		
		Moreover, suppose that
		$$
		\begin{array}{rcl}
			(u,p)
			& \in
			&   B^{k,2s,s}_{i,\mathrm{vel}} (X_T) \times 
			B^{k+1,2(s-1),s-1}_{i-1,\mathrm{pre}} (X_T),
			\\
			\mathcal{A}_i (u,p)
			\, = \, (f,u_0)&\in
			&   B^{k,2(s-1),s-1}_{i,\mathrm{for}} (X_T) \times H_i^{k+2s}.
		\end{array}
		$$
		
		Let us show that Problem \eqref{eq3} has at most one solution $ (u,p) $ in the space $ B^{k,2s,s}_{i,\mathrm{vel}} (X_T) \times 
		B^{k+1,2(s-1),s-1}_{i-1,\mathrm{pre}} (X_T) $. Indeed, let $ (u', p') $ and $(u'',p'')$ be any two solutions to \eqref{eq3} from the declared
		function space, i.e. $\mathcal{A}_i (u',p') = \mathcal{A}_i (u'',p'')$
		and sections $ u = u'-u'' $ and $ p = p'-p'' $ satisfies \eqref{eq3} with zero data $ (f,u_0) = (0,0) $. Moreover, as the left side of \eqref{eq3} is integrable with square we have 
		\begin{equation*}
			\frac{d}{dt} \|u\|^2_{{L}_{i}^2(X)}
			+ 2\mu \|A^i u\|^2_{{L}_{i}^2(X)} = \Big(\left( \B (u'',u'') -  \B (u',u')\right) , u\Big)_{{L}_{i}^2(X)} ,
		\end{equation*}
		with the condition $ u (\cdot,0) = 0 $. Hence it follows from the \eqref{eq9}, Lemma \ref{l.Groenwall} and H\"older's inequality that $ u\equiv 0 $. On the other hand, Corollary 
		\ref{p.nabla.Bochner} implies $ p'=p'' $.	So, the operator $\mathcal{A}_i$ of \eqref{eq.map.A} is injective.
		
		Finally, it easy to see that the Frech\'et derivative
		$\mathcal{A}'_i {(w,p_0)}$
		of the nonlinear mapping $\mathcal{A}$ at an arbitrary point
		$$
		(w,p_0)
		\in   B^{k,2s,s}_{i,\mathrm{vel}} (X_T) \times 
		B^{k+1,2(s-1),s-1}_{i-1,\mathrm{pre}} (X_T)
		$$
		coincides with the continuous linear mapping $\mathcal{A}_{w,i}$ of \eqref{eq.map.Aw}.
		By \eqref{embb} and Theorem \ref{t.exist.NS.lin.strong}, $\mathcal{A}_{w,i}$ is an invertible continuous linear
		mapping from
		$B^{k,2s,s}_{i,\mathrm{vel}} (X_T) \times   
		B^{k+1,2(s-1),s-1}
		_{i-1,\mathrm{pre}} (X_T)$ to
		$  B^{k,2(s-1),s-1}_{i,\mathrm{for}} (X_T) \times H_i^{k+2s}$.
		Both the openness of the mapping $\mathcal{A}_i$ and the continuity of its local inverse mapping
		now follow from the implicit function theorem for Banach spaces, 
		see for instance  \cite[Theorem 5.2.3, p.~101]{Ham82}.
	\end{proof}

	For the de Rham complex over the torus ${\mathbb T}^3$  at the degree $i=1$ 
	Theorem \ref{t.open.NS.short} was proved in \cite{ShT20}; actually 
	this situation corresponds to the Navier-Stokes equations for incompressible 
	fluid in the periodic setting, see \cite{Temam95}.

	It is worth to note that for an  Existence Theorem related to even  weak (distributional) 
	solutions to \eqref{eq3} one should necessarily assume that the bilinear forms $M_{i,1}$ 
	have additional properties. For example, in the above case for the de Rham complex this is 
	the vanishing property of the so-called trilinear form, see  \cite{Ladyz}, \cite{Lion69}, 
	\cite{Temam79}. This means that the open mapping theorem is only a first step toward an 
	Existence Theorem for regular solutions to \eqref{eq3}. 
	
	\bigskip
	
	\textit{The work was supported by the Foundation for the Advancement of Theoretical Physics and Mathematics "BASIS".}

	%\makeRusTit   %% DON'T CHANGE!!!
\end{document}